\newtheorem{thm}{Theorem}[section]
\newtheorem{cor}[thm]{Corollary}
\newtheorem{defn}[thm]{Definition}
\newtheorem{lem}[thm]{Lemma}
\newtheorem{prob}[thm]{Problem}
\newtheorem{prop}[thm]{Proposition}
\newtheorem{ill}[thm]{Illustration}
\numberwithin{equation}{section}
\def\ni{\noindent}
\def\N{\mathbb{N}}
\def\cS{\mathcal{S}}
\title{\textbf{\sc On the Curling Number of Certain Graphs}}
\author{Johan Kok}
\affil{\small Tshwane Metropolitan Police Department \\ City of Tshwane, Republic of South Africa. \\ E-mail: kokkiek2@tshwane.gov.za}
\author{Sudev Naduvath}
\affil{\small Department of Mathematics \\ Vidya Academy of Science \& Technology \\Thalakkottukara, Thrissur, India.\\ E-mail: sudevnk@gmail.com}
\author{Chithra Sudev}
\affil{\small Naduvath Mana, Nandikkara \\ Thrissur, India.\\ E-mail: chithrasudev@gmail.com}
\date{}
\begin{document}
\maketitle

\begin{abstract}
In this paper, we introduce the concept of curling subsequence of simple, finite and connected graphs. A curling subsequence is a maximal subsequence $C$ of the degree sequence of a simple connected graph $G$ for which the curling number $cn(C)$ corresponds to the curling number of the degree sequence per se and hence we call it the curling number of the graph $G$. A maximal degree subsequence with equal entries is called an identity subsequence.  The number of identity curling subsequences in a simple connected graph $G$ is denoted $ic(G)$. We show that the curling number conjecture holds for the degree sequence of a simple connected graph $G$ on $n \ge 1$ vertices. We also introduce the notion of the compound curling number of a simple connected graph $G$ and then initiate a study on the curling number of certain standard graphs like Jaco graphs and set-graphs. 
\end{abstract}

\ni {\bf Keywords:} Curling number, curling subsequence, identity curling subsequence, compound curling number, Rasta graph.

\ni {\footnotesize \textbf{AMS Classification Numbers:} 05C07, 05C20, 05C38, 05C70, 05C75}
 
\section{Introduction}

For general notation and concepts in graph theory, we refer to \cite{BM1,CL1,GY1} and\cite{DBW}. All graphs mentioned in this paper are simple, connected finite graphs unless mentioned otherwise. 

The notion of \textit{curling number} of a sequence of integers is introduced in \cite{CLSW} as follows. 

Let $S=S_1S_2S_3\ldots S_n$ be a finite string. Write $S$ in the form $XYY\ldots Y=XY^k$, consisting of a prefix $X$ (which may be empty), followed by $k$ copies of a non-empty string $Y$. This can be done in several ways. Pick one with the greatest value of $k$ . Then, this integer $k$ is called the {\em curling number} of $S$ and is denoted by $cn(S)$. 

The \textit{Curling Number Conjecture} (see \cite{CS1}) states that if one starts with any finite string, over any alphabet, and repeatedly extends it by appending the curling number of the current string, then eventually one must reach a $1$.

More studies on curling number of integer sequences have been done in \cite{CLSW} and and \cite{CS1}. Motivated from these studies, we extend the concepts of curling number of sequences to the degree sequences of graphs and hence establish some interesting results in this area. 

In our present discussion, we slightly amend our notation and definition of the curling number so that we are able to introduce a new notion called \textit{compound curling number} of a simple connected graph $G$, as explained below. 
 
\begin{defn}{\rm 
Given an initial finite non-empty sequence $S_0 = (a_1, a_2, a_3, \ldots, a_n)$, $a_i \in \N_0$, let $\epsilon$ be the empty subsequence. If we partition the sequence $S_0$ into two subsequences, say $X,Y$, we can write $S_0$ as the string $S_0 =X\circ Y$. Hence, we can write sequence $S_0$ as a string of subsequences $S_0 =\epsilon \circ X_1^{k^*_1}$ or $S = X_1^{k_1}\circ X_2^{k_2}\circ \ldots \circ X_l^{k_l}$, where $X_i^{k_i} \ne \epsilon$, $1 \le i \le l, ~ l \ge 2$.}
\end{defn}

Henceforth, the initial sequence $S_0$ will be considered to be finite and non-empty unless mentioned otherwise.

\begin{defn}\label{Def-1.2}{\rm 
If after re-arrangement of the entries of a general sequence $S$ to obtain a specific initial sequence $S_0$, we can write $S_0 = \epsilon \circ  X_1^{k^*_1}$ or $S = X_1^{k_1}\circ X_2^{k_2}\circ \ldots\circ X_l^{k_l}$, where $X_i^{k_i} \ne \epsilon$, $1 \le i \le l, l \ge 2$ and $k_1 \le k_2 \le k_3 \le \ldots \le k_l$ and $k_l$ a maximum over all possible subsequences, we define the curling number of $cn(S_0) = \max\{k^*_1, k_l\}$.}
\end{defn}

The term $X^{k_i}_i$, explained in Definition \ref{Def-1.2}, is called a {\em string factor} of the initial sequence $S_0$. 

Note that if $S_0$ has $n$ entries and is written $S_0 = \epsilon \circ  X^{k^*_1}_1$ then $k^*_1 \in \{1, n\}$. Also note that for the purposes of determining the curling number of an initial sequence $S_0$ we only have to write either $S_0 = \epsilon \circ X^{k^*_1}_1$ or $S_0 = X^{k_1=1}_1\circ X^{k_2}_2$ for $\max k_2$ over all possible subsequences. It follows easily that $cn(S_0) = cn(X^{k_2}_2)$ for $\max k_2$ over all possible subsequences.

\section{Curling Sub-Sequences of Graphs}

Let us first introduce the notion of a curling subsequence of the degree sequence of a simple connected graph as follows.

\begin{defn}{\rm 
A \textit{curling subsequence} of a simple connected graph $G$ is defined to be a maximal subsequence $C$ of the well-arranged degree sequence of $G$ such that $cn(C) = \max\{cn(S_0)\}$ for all possible initial subsequences $S_0$. } 
\end{defn}

\begin{defn}{\rm 
The \textit{curling number} of a graph $G$ is the curling number of a curling subsequence $C$ of $G$. That is, $cn(G) = cn(C)$, where $C$ is a curling subsequence of $G$.}
\end{defn}

Note that a graph $G$ can have a number of curling subsequences. Hence, we have the following notions.

\begin{defn}{\rm 
The number of curling subsequences in a given graph $G$ is called the \textit{curling index} of $G$ and is denoted by $\vartheta(G)$.}
\end{defn} 

\begin{defn}{\rm 
A maximal degree subsequence with equal entries is called an {\em identity subsequence}. An identity subsequence can be a curling subsequence, the number of identity curling subsequences found in a simple connected graph $G$ is denoted $ic(G)$.}
\end{defn}
 
\ni An illustration to the above notions is given below.

\begin{ill}{\rm 
Consider the graph $G$ with degree sequence $(3,5,3,3,5,5,6)$. After re-arrangement of the entries, we can consider the four possible initial degree sequences $S_0=(6,3,3,3,5,5,5)$ or $S_0=(6, 5,5,5,3,3,3)$ or $S_0=(6,3,5,3,5,3,5)$ or $S_0=(6,5,3,5,3,5,3)$. We can identify the two identity curling subsequences $(3,3,3)$ and $(5,5,5)$ as well as the two additional curling subsequences $(3,5,3,5,3,5)$ and $(5,3,5,3,5,3)$. Since $(3,3,3)=\epsilon (3)^3$,  we have $cn((3,3,3))=3$. Similarly, $(5,5,5) = \epsilon (5)^3, cn((5,5,5)) =3$, $(3,5,3,5,3,5) = \epsilon (3,5)^3, cn((3,5,3,5,3,5)) =3$ and $(5,3,5,3,5,3) = \epsilon (5,3)^3, cn((5,3,5,3,5,3))= 3$. Therefore $cn(G) =3$. Note that $(6)$ is an identity subsequence as well.}
\end{ill}

Invoking the above definitions, we establish the following theorem on the number of curling subsequences of a given graph.

\begin{thm}\label{Thm-2.2}
The number of curling subsequences of a simple connected graph $G$ is given by 
\begin{equation*}
\vartheta(G) =
\begin{cases}
1 & \text{if} ~ ic(G)=1\\
ic(G) +  ic(G)! & \text{otherwise}
\end{cases}
\end{equation*}
\end{thm}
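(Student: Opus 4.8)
The plan is to first determine $cn(G)$ explicitly in terms of the degree sequence, and then split into the two cases of the statement. Throughout, let $M$ be the largest multiplicity with which any single value occurs in the degree sequence of $G$, and let $v_1,\ldots,v_m$ be the distinct degree values occurring with that multiplicity $M$. First I would show $cn(G)=M$. Arranging the $M$ copies of $v_1$ consecutively produces a decomposition $S_0=\epsilon\circ (v_1)^{M}$ (or, when $G$ is regular, the whole sequence $(v_1)^{n}$), so $cn(G)\ge M$. Conversely, in any arrangement written with prefix $X_1$ and repeated block $Y=X_2$ so that $S_0=X_1^{k_1=1}\circ X_2^{k_2}$, every value occurring in $Y$ occurs at least $k_2$ times inside $Y^{k_2}$ and hence has multiplicity at least $k_2$ in the degree sequence; thus $k_2\le M$, and by the remark following Definition \ref{Def-1.2} we get $cn(S_0)=cn(Y^{k_2})\le M$. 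Hence $cn(G)=M$.

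Next, the identity curling subsequences. The maximal equal-entry subsequence attached to a value $v$ of multiplicity $\mu_v$ is $(v)^{\mu_v}$, with $cn((v)^{\mu_v})=\mu_v$. By the previous paragraph this is a curling subsequence precisely when $\mu_v=M$, i.e. precisely for $v\in\{v_1,\ldots,v_m\}$; therefore $ic(G)=m$. This already settles the case $ic(G)=1$: then $m=1$, and by the multiplicity bound above any string factor $Y^{k}$ realizing $cn(G)=M$ must use only values of multiplicity $\ge M$, hence only $v_1$, and it cannot use $v_1$ more than once without forcing multiplicity $>M$. So $Y=(v_1)$, the unique curling subsequence is the identity subsequence $(v_1)^{M}$, and $\vartheta(G)=1$.

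Now suppose $ic(G)=m\ge 2$. By the same reasoning, every curling subsequence has the form $Y^{M}$ where $Y$ uses only values from $\{v_1,\ldots,v_m\}$, each at most once; so $Y$ is a linear ordering of some nonempty $T\subseteq\{v_1,\ldots,v_m\}$. If $|T|=1$ this is one of the $m$ identity curling subsequences. If $|T|\ge 2$, then $T$ must be all of $\{v_1,\ldots,v_m\}$: if some $v_j$ were missing, the $M$ available copies of $v_j$ could be interleaved through $Y^{M}$, producing a strictly larger subsequence still of curling number $M$ and contradicting maximality. Hence $Y$ runs over the $m!$ orderings of $\{v_1,\ldots,v_m\}$, each giving a distinct curling subsequence $Y^{M}$ (distinct orderings give distinct strings, and none coincides with an identity subsequence since $m\ge 2$). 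Summing, $\vartheta(G)=m+m!=ic(G)+ic(G)!$.

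The main obstacle is the maximality bookkeeping in the last case: one must argue carefully that a partial interleaving $Y^{M}$ with $T\subsetneq\{v_1,\ldots,v_m\}$ is never maximal, while the identity subsequences $(v_i)^{M}$ do still qualify as curling subsequences, and that no other configurations survive — for instance string factors $Y$ with a repeated entry, or blocks mixing in a value of multiplicity below $M$, all of which are excluded by the multiplicity bound of the first paragraph. Once $cn(G)=M$ and the correspondence between maximum-multiplicity values and identity curling subsequences are in hand, the remainder is a counting argument.
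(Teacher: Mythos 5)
Your proposal is correct and arrives at the paper's count by the same underlying idea --- $m$ identity subsequences plus $m!$ interleavings of the distinct maximum-multiplicity degree values --- but it supplies the structural argument that the paper omits. The paper's proof merely constructs the $l!$ sequences $(\alpha_i,\alpha_j,\ldots,\alpha_s)$ of distinct entries and asserts $\vartheta(G)=ic(G)+ic(G)!$; it never shows that these, together with the identity subsequences, exhaust all curling subsequences, nor does it justify the case $ic(G)=1$ beyond ``clearly.'' Your lemma that $cn(G)=M$, the maximum multiplicity in the degree sequence, and the resulting bound --- any repeated block $Y$ in a factorization $X_1\circ Y^{k}$ with $k=M$ must consist of distinct values each of multiplicity exactly $M$ --- is precisely the missing step that rules out blocks with repeated entries or with entries of submaximal multiplicity, and it is what makes both cases actually follow. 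The difficulty you flag at the end is genuine but is inherited from the paper's definitions rather than introduced by you: the maximality used to discard partial interleavings $Y^{M}$ with $\emptyset\ne T\subsetneq\{v_1,\ldots,v_m\}$ would, applied uniformly, also discard the identity subsequences $(v_i)^{M}$ when $m\ge 2$ (in the paper's own example, $(3,3,3)$ is contained in $(3,5,3,5,3,5)$ yet both are counted). The paper resolves this only by convention --- identity subsequences are maximal within the class of equal-entry subsequences --- and once that convention is granted, your argument is complete and strictly more rigorous than the original.
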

\begin{proof}
If $ic(G) =1$, then, clearly by the definition of curling subsequence, $G$ can have exactly one curling subsequence. That is, $\vartheta(G) = 1$ if $ic(G) =1$ 

Then, for a positive integer $l \ge 2$, assume that the graph $G$ has $l$ identity curling subsequences, say $S_1 = (\alpha _1, \alpha _1,\alpha _1,\ldots,\alpha _1)$,  $S_2 = (\alpha _2,\alpha _2,\alpha _2,\ldots,\alpha _2,)$, $S_3 = (\alpha _3, \alpha _3,\alpha _3,\ldots, \alpha _3)$ and proceeding like this up to $S_l = (\alpha _l,\alpha _l, \alpha _l,\ldots, \alpha _l)$, each of these subsequences containing $k \ge 2$ terms. 

Now, we can construct exactly $ic(G)\,! = l\, !$ sequences of the form $(\alpha _i,\alpha _j, \alpha _m, \ldots,\\ \alpha _s)$ containing $l$ entries each of which are distinct. Hence, the degree sequence can be written as $X^{k_1 = 1}_1\circ X^{ic(G)}_2$, with $ X^{ic(G)}_2 \in \{\alpha _i,\alpha _j, \alpha _m, \ldots, \alpha _s\}$, where each of these $l$ entries are distinct and $1\le i,j,m,\ldots,s\le l$. Therefore,  $\vartheta(G) =  ic(G) + ic(G)!$ ways. This completes the proof.
\end{proof}

The validity of curling number conjecture for a simple, connected graph under consideration is one of the most interesting problem in this context. The following theorem establishes the existence of curling number conjecture for a given graph. 

\begin{thm}\label{Thm-2.3}
For the degree sequence of a non-trivial, connected graph $G$ on $n$ vertices, the curling number conjecture holds.
\end{thm}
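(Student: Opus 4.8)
The plan is to run the Chaffin--Sloane--Wilks extension process (using the suffix-based curling number, as in the statement of the conjecture) starting from the degree sequence written in non-increasing order, $D=(d_1,d_2,\ldots,d_n)$ with $d_1\ge d_2\ge\cdots\ge d_n=:\delta$, and to show that a $1$ gets appended after at most three steps. (If $n=1$ the degree sequence is $(0)$ and $cn=1$ trivially, so assume $n\ge 2$; connectedness then gives $\delta\ge 1$.) The whole argument will rest on two elementary observations about curling numbers.

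\emph{Observation 1.} In a non-increasing finite sequence $T=(t_1,\ldots,t_m)$, every suffix that can be written as $Y^k$ with $k\ge 2$ is constant: a period $p<\ell$ of the suffix $t_{m-\ell+1}\cdots t_m$ forces $t_i=t_{i+p}$ for each admissible $i$, and combined with $t_{m-\ell+1}\ge\cdots\ge t_{m-\ell+1+p}$ this squeezes $p+1$ consecutive entries to a common value; iterating along the suffix makes the whole suffix constant. Hence $cn(T)$ equals the multiplicity of $\min T$ in $T$; in particular $cn(D)=m_0$, where $m_0$ is the number of vertices of $G$ of minimum degree $\delta$, and $1\le m_0\le n$. \emph{Observation 2.} If $T$ is non-increasing with $\min T=\delta$ and $e>\delta$, then $cn(T\circ e)=1$. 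Indeed, suppose $cn(T\circ e)=k\ge 2$, witnessed by a suffix $Y^k$ of period $p=|Y|$. A period $p=1$ would force the last two entries $t_m=\delta$ and $e$ to coincide, which is false. If $p\ge 2$, then the suffix has length $kp\ge p+2$, so inside it the last entry and the entry $p$ places earlier are corresponding symbols of adjacent copies of $Y$, giving $t_{m+1-p}=e$; likewise the second-to-last entry and the entry $p$ places earlier give $t_{m-p}=t_m=\delta$. But $t_{m-p}\ge t_{m+1-p}\ge t_m$ by monotonicity, so $\delta\ge e\ge\delta$, contradicting $e>\delta$.

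It then remains to split into cases according to how $m_0=cn(D)$ compares with $\delta$. If $m_0=1$, the value appended first is already $1$. If $m_0>\delta$, then Observation 2 (with $T=D$, $e=m_0$) gives $cn(D\circ m_0)=1$, so a $1$ is appended at the second step. If $m_0<\delta$, then $m_0$ does not occur among the entries of $D$ (all of which are $\ge\delta$), so $D\circ m_0$ is still non-increasing with minimum $m_0$ of multiplicity $1$, and Observation 1 gives $cn(D\circ m_0)=1$. Finally, if $m_0=\delta\ge 2$, then $D\circ m_0$ is non-increasing and $\delta$ now has multiplicity $\delta+1$, so Observation 1 gives $cn(D\circ m_0)=\delta+1>\delta=\min(D\circ m_0)$, and one more application of Observation 2 yields $cn\big(D\circ m_0\circ(\delta+1)\big)=1$. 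In every case a $1$ is reached, which is exactly the claim.

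I expect the only slightly delicate point to be the positional bookkeeping in Observation 2 --- keeping straight which entries of $T\circ e$ a periodic suffix identifies, and checking the indices $m-p$ and $m+1-p$ are legitimate --- together with the (strategic) decision to run the process from the non-increasing arrangement rather than from an interleaved ``curling subsequence'' such as $(a,b,a,b,\ldots)$, whose suffix structure is far less transparent. Once these points are pinned down, everything else is routine.
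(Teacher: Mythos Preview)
Your proof is correct and follows essentially the same route as the paper's: a short case analysis on how the curling number of the initial degree sequence compares with the repeated degree value, showing that a $1$ is reached after at most three extension steps. Your treatment is tidier in two respects. First, you commit explicitly to the standard suffix-based curling number and to the non-increasing arrangement of the degree sequence, whereas the paper works with its rearrangement-based variant and leaves the ordering implicit. Second, you isolate and prove the two facts that do all the work (Observation~1: for a non-increasing sequence the curling number equals the multiplicity of the minimum; Observation~2: appending a value strictly larger than the minimum to a non-increasing sequence forces curling number $1$), where the paper simply asserts the resulting $S_1,S_2,S_3$ without justifying the curling-number computations. The case split itself matches: your case $m_0=\delta$ is the paper's ``entry values are $\alpha$ and the number of entries is $\alpha$'' sub-case of Case~3, your case $m_0>\delta$ subsumes the paper's Case~2 (regular graphs) together with part of Case~3, and your cases $m_0=1$ and $m_0<\delta$ cover the remainder. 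One cosmetic remark: in Observation~2 the chain ``$\delta\ge e\ge\delta$'' is slightly redundant---the contradiction already comes from $t_{m-p}\ge t_{m+1-p}$, i.e.\ $\delta\ge e$, alone.
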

\begin{proof}
Consider a graph $G$ on $n$ vertices. If $n=1$, then $G=K_1$ and hence $S_0=(0)$, $S_1=(0, 1)$ so the conjecture holds trivially. Now, let $n \ge 2$.  If the degree sequence of $G$ can only be written as $S_0 = \epsilon \circ X^{k^*_1}_1$ then $k^*_1 =$ either $1$ or $n$.

\vspace{0.2cm}

\ni \textit{Case 1:} If $k^*_1 = 1$ and the last entry of the degree sequence is 1, then $S_1 = S_0 \circ (1)$, $S_2 = S_0\circ (1,2)$, $S_3 = S_0\circ (1,2,1)$.  If the last entry is larger than 1, then $S_1 = S_0 \circ (1)$.  

\vspace{0.2cm}

\ni \textit{Case 2:} If $k^*_1 = n$ the $G$ is a regular graph. So $S_0 =\epsilon \circ X^n_2$. So $S_1 = \epsilon \circ X^n_2 \circ (n)$, $S_2 = \epsilon \circ X^n_2 \circ (n 1)$. 

\vspace{0.2cm}

\ni \textit{Case 3:} If the degree sequence can be written as $X^{k_1 = 1}_1 \circ X^{k_2}_2$ for $\max k_2$ and $X^{k_2}_2$ is an identity curling subsequence, over all possible subsequences, we have that if the entry values are $\alpha$ and the number of entries is $\alpha$, then $S_1 = X^{k_1 = 1}_1 \circ X^\alpha_2 \circ (\alpha)$, $S_2 = X^{k_1 = 1}_1 \circ X^\alpha_2 \circ (\alpha, \alpha + 1)$, $S_3 = X^{k_1 = 1}_1 \circ X^\alpha_2 \circ (\alpha, \alpha + 1, 1)$. For all other cases we have that $S_0 = X^{k_1 = 1}_1 \circ X^{k_2}_2 = X^{k_1 = 1}_1 \circ X^{\beta}_2, \beta \ne \alpha$. So $S_1 = X^{k_1 = 1}_1 \circ X^{\beta}_2 \circ (\beta)$, $ S_2 = X^{k_1 = 1}_1 \circ X^{\beta}_2 \circ (\beta,1)$. 

Hence, the conjecture holds in all the above three cases. This completes the proof.
\end{proof}

We note that any degree sequence can be re-arranged to be written as a string of identity subsequences. In view of this fact, we propose the following definition.

\begin{defn}{\rm 
If the degree sequences of graphs $G_1$ and $G_2$ are written as strings of identity subsequences and the entries in two identity subsequences $X_i, X_j$ ($X_i$ in $G_1$ with $s$ entries and $X_j$ in $G_2$ with $t$ entries) are equal they are called \textit{similar identity subsequences} and the merger of these identity subsequences is the identity subsequence with $s+t$ entries which is denoted, $X_i\uplus X_j$.}
\end{defn}

The merger of similar identity subsequences plays an important role when considering the union of a number of graphs. Hence, if $G = \bigcup\limits^{m}_{i=1}G_i$ and each $G_i$ has a similar identity subsequence say $X_i=(\alpha, \alpha, \alpha, \ldots, \alpha)$ containing $k_i$ entries each, then $X_1\uplus X_2\uplus X_3\uplus \ldots \uplus X_m = (\alpha)^k$, where $k=\sum\limits^{m}_{i=1}k_i$. 

If a particular degree sequence of $G_j$ does not contain the identity subsequence $X_r$ then it is represented by the empty identity subsequence $\epsilon$ and $k_r = 0$.

Invoking the above concepts on similar identity subsequences, we establish the following theorem on the curling number of a graph which is the union of finite number of finite graphs. 

\begin{thm}
If a graph $G$ is the union of $m$ simple connected graphs $G_i;~1\le i\le m$ and the respective degree sequences are re-arranged as strings of identity subsequences, then
\begin{equation*} 
cn(G)=
\begin{cases}
\max\{cn(G_i)\}, & \text{if} ~ X_i, X_j \text{are not pairwise similar},\\ 
\max\sum\limits^{m}_{i=1}k_i, & \text {for all merger of similar identity subsequences.}
\end{cases}
\end{equation*} 
\end{thm}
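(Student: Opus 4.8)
The plan is to prove the formula by splitting into the two exhaustive cases flagged in the statement and, in each case, reducing the computation of $cn(G)$ to the curling-number machinery of Definitions~\ref{Def-1.2} via the union structure of the degree sequence. First I would record the elementary observation that the degree sequence of $G=\bigcup_{i=1}^m G_i$ is, up to re-arrangement, the concatenation (multiset union) of the degree sequences of the $G_i$; when each $G_i$ is written as a string of identity subsequences, the degree sequence of $G$ is obtained by listing all these identity subsequences together, and any two that are similar (have the same entry value $\alpha$) may be coalesced by the merger operation $\uplus$ into a single identity subsequence with $\sum_i k_i$ entries, where $k_i$ is the multiplicity of $\alpha$ in $G_i$ (with $k_i=0$ recorded by the empty identity subsequence $\epsilon$ when $\alpha$ does not occur in $G_i$).

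Next I would handle \emph{Case 1}, where no two of the identity subsequences $X_i,X_j$ across the $G_i$ are pairwise similar. Then no merger occurs, so the well-arranged degree sequence of $G$ is just the disjoint juxtaposition of the well-arranged degree sequences of the individual $G_i$. By the remark after Definition~\ref{Def-1.2}, $cn(S_0)=cn(X_2^{k_2})$ for the maximal string factor $X_2^{k_2}$ taken over all subsequences; since every subsequence of the degree sequence of $G$ that is a candidate string factor must (in the absence of mergers, because distinct values cannot combine to raise the exponent) lie inside one of the $G_i$'s degree sequences — or be built from the curling subsequence of a single $G_i$ — the maximal exponent achievable is exactly $\max_i cn(G_i)$. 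I would spell this out by noting that interleaving entries coming from two different $G_i$ cannot increase $k$, because the value-patterns are disjoint, so the optimal $Y$-block is entirely contained in one summand's degree sequence, giving $cn(G)=\max\{cn(G_i)\}$.

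For \emph{Case 2}, suppose some identity subsequences are pairwise similar; say the similar ones share entry value $\alpha$ and, after merging, $X_1\uplus X_2\uplus\cdots\uplus X_m=(\alpha)^k$ with $k=\sum_{i=1}^m k_i$. Then the well-arranged degree sequence of $G$ can be written $X_1^{k_1=1}\circ (\alpha)^{k}$ (placing the merged identity block last), so $cn(G)\ge k=\sum_i k_i$; taking the maximum over all families of similar identity subsequences (i.e. over all values $\alpha$ that recur) gives $cn(G)\ge \max\sum_{i=1}^m k_i$. For the reverse inequality I would argue that any string factor $Y^{k'}$ of the degree sequence of $G$ with $Y$ of length $\ge 2$ and $k'$ large would, by maximality, have to be an identity subsequence (length-one $Y$), since a non-identity periodic block with period $\ge 2$ cannot be assembled across the union without being already present with at least that exponent inside one $G_i$; hence the governing exponent is realized by a merged identity block, and equals $\max\sum_i k_i$.

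The main obstacle I expect is the reverse inequality in Case~2 (and the analogous "no interleaving helps" claim in Case~1): one must rule out the possibility that a cleverly interleaved subsequence drawn from several $G_i$ produces a repeated block $Y$ with period $\ge 2$ and exponent exceeding every merged identity count and every individual $cn(G_i)$. Handling this cleanly requires the earlier reduction "$cn(S_0)=cn(X_2^{k_2})$ over all subsequences" together with a short combinatorial lemma that a period-$p$ block ($p\ge 2$) repeated $k'$ times forces $k'$ copies of each of its $p$ distinct-position values, which across a union is bounded by the per-value multiplicities and hence already attainable inside a single summand or via merging; I would isolate this as the key step and treat the two cases of the theorem as corollaries of it.
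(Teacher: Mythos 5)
Your proposal follows the same two-case decomposition as the paper's proof, but it is substantially more complete. The paper's argument for both branches is little more than a restatement of the definitions: for case (i) it says that since every identity subsequence is some $(\alpha)^{k_i}$ a maximal $k_j$ exists, hence $cn(G)=\max\{cn(G_i)\}$; for case (ii) it merges all similar identity subsequences, rearranges the degree string, and declares that the result follows from Definition~\ref{Def-1.2}. Neither branch in the paper addresses the point you correctly isolate as the crux: ruling out a rearranged periodic block $Y^{k'}$ with $|Y|\ge 2$ whose exponent exceeds every individual $cn(G_i)$ (case 1) or every merged identity count (case 2). Since the paper's own Illustration 2.6 explicitly admits such blocks (e.g.\ $(3,5)^3$), this is a real gap in the published proof, and your multiplicity lemma --- that $Y^{k'}$ forces $k'$ copies of each value occurring in $Y$, so $k'$ is bounded by the per-value multiplicities, which in turn are bounded by the relevant $cn(G_i)$ or merged count --- is exactly what closes it. One small imprecision on your side: in case 1 the optimal $Y$-block need not be ``entirely contained in one summand's degree sequence'' (a cross-summand block $(\alpha,\beta)^{k}$ is perfectly possible); what your own lemma actually gives is that its exponent cannot exceed $\max_i cn(G_i)$, which is all you need, so state the conclusion as a bound rather than a containment.
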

\begin{proof}
\begin{enumerate}
\item[(i)] If $X_i, X_j$ are not pairwise similar for any values of $i$ and $j$, then any identity subsequence can be written as $(\alpha)^{k_i\ge 1}$. Hence, a $\max k_j$ exists for some $j$ thus, $cn(G) = cn(\bigcup\limits^{m}_{i=1}G_i) = \max\{cn(G_i): 1 \le i \le m\}$.

\item[(ii)] After all mergers of similar identity subsequences, re-arrange the degree sequence of $G$ in such a way that it can be written as $(\alpha_1)^{k_1}\circ (\alpha_2)^{k_2}\circ )\alpha_3)^{k_3}\circ \ldots \circ (\alpha_l)^{l = \max\sum\limits^{m}_{i=1}k_i}$. From the Definition \ref{Def-1.2}, it follows that, $cn(G) = cn(\bigcup\limits^{m}_{i=1}G_i) = \max\sum\limits^{m}_{i=1}k_i$.
\end{enumerate}
\end{proof}

\subsection{Compound Curling Number of a Graph}

As we have stated earlier, any degree sequence of a graph $G$ can be written as a string of identity curling subsequences. In view of this fact, we introduce the concept of the compound curling number of a graph $G$ as follows.

\begin{defn}\label{Def-3.1}{\rm 
Let the degree sequence of the graph $G$ be written as a string of identity curling subsequences say, $X^{k_1}_1\circ X^{k_2}_2\circ X^{k_3}_3 \ldots \circ X^{k_l}_l$. The \textit{compound curling number} of $G$, denoted $cn^c(G)$ is defined to be, $cn^n(G) = \prod\limits^{l}_{i=1}k_i$.}
\end{defn}

\ni Next, let us illustrate the curling number and compound curling number of certain fundamental standard graphs. 

\begin{enumerate}\itemsep0mm
\item The degree sequence of $K_n$ is $(n-1,n-1,n-1,\ldots,n-1)$, consisting of $n$ terms. Hence, $S_0=\epsilon\circ (n-1)^n$. Therefore, $cn(K_n)=n$ and $cn^c(K_n)=n$.

\item The degree sequence of $K_{m,n}$ is $(\underbrace{m,m,\ldots,m,}_{n-terms}\underbrace{n,n,\ldots,n}_{m-terms})$. Therefore, $S_0=(m)^n\circ (n)^m$. Hence, $cn(K_{m,n})=\max\{m,n\}$. Similarly, for $K_{n,n}$, we have $S_0=\epsilon\circ (n)^{2n}$ and hence $cn(K_{n,n})=2n$. Moreover, the compound curling number of $K_{m,n}$ is $mn$ and that of $K_{n,n}$ is $2n$ itself.

\item For a path $P_n$ on $n$ vertices, we have $S_0=(1)^2\circ (2)^{n-2}$. If $n=2$, then $S_0=\epsilon\circ (1)^2$ and if $n=3$, then $S_0= (1)^2\circ (2)^1$. Hence, $cn(P_{n})=2$ for $n=2,3$ and $cn(P_{n})=n-2$ if $n\ge 4$. Also, the compound curling number of $P_n$ is given by $cn^c(P_{n})=2$ for $n=2,3$ and $cn^c(P_n) = 2(n-2)$.
 
\item For a cycle $C_n$, we have $S_0=\epsilon\circ(2)^n$ and hence $cn(C_n)=n=cn^c(C_n)$.

\item A \textit{wheel graph} is defined as $W_n=C_{n-1}+K_1$. The degree sequence of $W_n$ is $S_0=(n-1)^1\circ (3)^{n-1}$. Hence, the curling number and the compound curling number of the wheel graph $W_n$ are $n-1$. 

\item A ladder graph $L_n$ is the graph defined by $L_n=P_n\times P_2$, where $n\ge 2$. If $n=2$, then $L_2=C_4$, and this case we have already discussed. Therefore, for $n>2$, in $L_n$, there are $2n$ vertices of which $4$ vertices have degree $2$ and all other vertices have degree $3$. That is, the degree sequence of $L_n$ is $(2)^4\circ (3)^{2n-4}$. Therefore,  $cn(L_n) = 2(n-2)$ and $cn^c(L_n) = 8(n-2)$.

\item An $m$-regular graph of girth $n$ with minimum possible vertices is called an \textit{$(m,n)$-cage}.  $(2,n)$-cage is an $n$-cycle and hence its curling number is $n$, $(m,3)$-cage is a complete graph $K_{m+1}$ and hence its curling number is $m+1$ and the $(m,4)$-cage is the complete bipartite graph $K_{m,m}$ and hence its curling number is $2m$. 
For a general (m,n)-cage $G$, the number of vertices required is denoted by $f(m,n)$ and is determined as
$f(m,n) =
\begin{cases}
\frac{m(m-1)^r - 2}{m-2}, & \text {if}~ r = \frac{n-1}{2},\\
\frac{2(m-1)^r - 2}{m-2}, & \text {if}~ r = \frac{n}{2}.
\end{cases}$ 
Therefore, the curling number of an $(m,n)$-cage is $f(m,n)$. It can also be noted that the compound curling number of a cage graph is equal to its curling number. 
\end{enumerate}

\ni Invoking the above definitions, we arrive at the following results.

\begin{prop}
The compound curling number of any regular graph $G$ is equal to its curling number.
\end{prop}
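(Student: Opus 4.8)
The plan is to unwind the definitions and observe that for a regular graph the degree sequence is a single identity subsequence, so both invariants are read off the same exponent. First I would recall that a graph $G$ on $n$ vertices is $r$-regular precisely when its degree sequence is $(r,r,\ldots,r)$ with $n$ entries. By the remark following Definition~\ref{Def-1.2}, such a sequence can only be written in the form $S_0 = \epsilon \circ X_1^{k_1^*}$ with $X_1 = (r)$ and $k_1^* = n$ (the prefix $X$ must be empty and the only non-trivial periodic block is $(r)$ itself). Hence $cn(G) = \max\{k_1^*\} = n$.

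Next I would apply Definition~\ref{Def-3.1}. Writing the degree sequence as a string of identity curling subsequences, the only such decomposition here is the single block $X_1^{k_1} = (r)^n$, i.e. $l = 1$ and $k_1 = n$. Therefore $cn^c(G) = \prod_{i=1}^{1} k_i = n$. Comparing the two computations gives $cn^c(G) = n = cn(G)$, which is the claim. I would also note this is consistent with the worked examples already in the text, namely $K_n$, $C_n$, $K_{n,n}$ and the $(m,n)$-cages, all of which are regular and for which $cn$ and $cn^c$ were found to coincide.

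The only point requiring a little care — and the one I would flag as the main (though minor) obstacle — is justifying that a regular degree sequence admits \emph{no other} decomposition into identity subsequences that could change the product $\prod k_i$: since every entry equals $r$, any partition into maximal equal-entry blocks collapses to the single block $(r)^n$, so $l=1$ is forced and the compound curling number is unambiguous. Once this uniqueness is observed, the proof is immediate.
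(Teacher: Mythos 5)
Your proof is correct and follows exactly the route the paper takes: the paper simply observes that the degree sequence of an $r$-regular graph on $n$ vertices is $\epsilon\circ (r)^n$ and declares the rest obvious, while you fill in the same computation ($cn(G)=n$ from the single block and $cn^c(G)=\prod_{i=1}^{1}k_i=n$) in detail. Your added remark on the uniqueness of the decomposition into identity subsequences is a reasonable elaboration of what the paper leaves implicit.
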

\begin{proof}
Let $G$ be an $r$-regular graph on $n$ vertices. Then, the proof is obvious from the fact that the degree sequence of $G$ is $\epsilon\circ (r)^n$.
\end{proof}

\begin{prop}
If the degree sequence of two graphs $G$ and $H$ can be written as a string of identity curling subsequences, $X^{k_1}_1\circ X^{k_2}_2\circ X^{k_3}_3\circ \ldots \circ X^{k_t}_t$, and $Y^{l_1}_1\circ Y^{l_2}_2\circ Y^{l_3}_3\circ \ldots \circ X^{k_i}_i\circ \ldots \circ Y^{l_s}_s$, respectively, then the compounding curling number of the union of $G$ and $H$ is given by $cn^c(G\cup H) = cn^c(G) \prod \limits_{1\le j\le s,~ j\ne i}Y^{l_j}_{j} +cn^c(H) \prod \limits_{1\le j\le t, ~ j\ne i}X^{k_j}_j$.
\end{prop}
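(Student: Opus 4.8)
The plan is to read off the degree sequence of $G\cup H$ as a string of identity curling subsequences and then apply Definition \ref{Def-3.1} directly. First I would record precisely what the hypothesis provides: among the string factors of the two given representations exactly one pair is similar, namely the $i$-th factors --- the one coming from $G$ being $X_i$ with $k_i$ entries, the one coming from $H$ being $Y_i$ with, say, $l_i$ entries (the statement's notation $X^{k_i}_i$ for the $i$-th factor of $H$ is the special case $l_i=k_i$) --- both sharing a common degree value $\alpha_i$. Every remaining string factor $X_j$ $(1\le j\le t,\ j\ne i)$ and $Y_j$ $(1\le j\le s,\ j\ne i)$ is non-similar to all of the others, so all of their degree values are mutually distinct and distinct from $\alpha_i$.

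Next I would invoke the merger rule for similar identity subsequences. As $G$ and $H$ are on disjoint vertex sets, the degree sequence of $G\cup H$ is the concatenation of those of $G$ and of $H$; after this concatenation the factors $X_i$ and $Y_i$ coalesce into the single identity subsequence $X_i\uplus Y_i$ with $k_i+l_i$ entries, whereas every other identity subsequence is carried over unchanged. Since the resulting identity subsequences now have pairwise distinct degree values, none can be merged or regrouped further; re-arranging them as allowed by Definition \ref{Def-1.2} therefore exhibits the degree sequence of $G\cup H$ as a string of identity curling subsequences having string factors of sizes $k_j$ for $1\le j\le t$, $j\ne i$, of sizes $l_j$ for $1\le j\le s$, $j\ne i$, together with one additional string factor of size $k_i+l_i$.

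Finally I would compute, using Definition \ref{Def-3.1} and then only the distributive and commutative laws,
\begin{align*}
cn^c(G\cup H) &= (k_i+l_i)\prod_{\substack{1\le j\le t\\ j\ne i}} k_j \prod_{\substack{1\le j\le s\\ j\ne i}} l_j \\
&= \Big(\prod_{j=1}^{t}k_j\Big)\prod_{\substack{1\le j\le s\\ j\ne i}} l_j \;+\; \Big(\prod_{j=1}^{s}l_j\Big)\prod_{\substack{1\le j\le t\\ j\ne i}} k_j \\
&= cn^c(G)\prod_{\substack{1\le j\le s\\ j\ne i}} l_j \;+\; cn^c(H)\prod_{\substack{1\le j\le t\\ j\ne i}} k_j ,
\end{align*}
which is exactly the asserted identity (with $\prod_{j\ne i} l_j$ and $\prod_{j\ne i} k_j$ in the roles of $\prod_{1\le j\le s,\,j\ne i} Y^{l_j}_j$ and $\prod_{1\le j\le t,\,j\ne i} X^{k_j}_j$ from the statement). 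The step I expect to be the genuine obstacle is the middle one: one has to verify carefully that the non-similarity of all the remaining factors is precisely what forces the post-merger list of identity subsequences to be ``rigid'', i.e.\ that no further merging or re-grouping is possible, so that the product defining $cn^c$ is independent of the chosen decomposition into identity subsequences. Once that is established, the conclusion is the one-line algebra displayed above.
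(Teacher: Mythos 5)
Your proposal is correct and is essentially the argument the paper intends: merge the single pair of similar identity subsequences via $X_i\uplus Y_i$ into a factor of size $k_i+l_i$, observe that the remaining factors are pairwise non-similar and so contribute their multiplicities unchanged, and then apply Definition~\ref{Def-3.1} together with distributivity. The paper itself dismisses the proof as ``very clear and straight forward,'' so your write-up (including the sensible reading of $\prod Y^{l_j}_j$ as the product of the multiplicities $l_j$ and the explicit check that no further merging is possible) supplies exactly the computation the paper omits.
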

\begin{proof}
The proof of this proposition is very clear and straight forward.
\end{proof}

Let $n=\sum\limits_{i=n}^lt_i$, where each $t_i \in \N$ such that $t_i>t_{i+1}, ~ \forall ~i=1,2,\ldots, l$ and $t_l>1$. The general \textit{$l$-term summand set} is the set $\{t_1, t_2, t_3, \ldots, t_l\}$. 

By a {\em minimal graph} with respect to a given property, we mean a graph with minimum order and size satisfying that property. Next, using this terminology, let us introduce the terminology of some connected graphical embodiment for general {\em $l$-term} summand sets.

We now describe the construction of the minimal connected graphical embodiment called the {\em Rasta graph}, denoted by $G^{(l)}$, for a general $l$-term summand set which satisfies the condition $cn^c(G^{(l)}) = \prod\limits_{1\le i\le l, ~ i\ne 3}t_i$, and $cn^c(G^{(3)}) = t_2(t_1 + t_3)$ as follows.

\subsection*{Constructing a Rasta graph} 

\ni Consider a $t$-term summand set $\{t_1, t_2, t_3, \ldots, t_l\}$ with $t_1 > t_2 > t_3 > \ldots > t_l>1$. Then, a Rasta graph can be constructed in the following steps.
\begin{enumerate}\itemsep0mm
\item[S-1:] Consider $t_1$ vertices in the left column (first column) and $t_2$ vertices in the right column (second column) and construct $K_{t_1, t_2}$.
\item[S-2:] Add the third column of $t_3$ vertices and add the edges of $K_{t_2, t_3}$
\item[S-3:] Repeat Step 2 iteratively up to $t_l$
\item[S-4:] Exit.
\end{enumerate}

We note that a Rasta graph is the underlying graph of a directed graph mentioned in the following definition.

\begin{defn}{\rm 
For a $l$-term summand set $\{t_1, t_2, t_3, \ldots, t_l\}$ with $t_1 > t_2 > t_3 > \ldots > t_l>1$ define the directed graph $G^{(l)}$ with vertices $V(G^{(l)}) =\{v_{i,j}: 1\le j \le t_i, 1\le i \le l \}$ and the arcs, $A(G^{(l)}) =\{(v_{i,j},v_{(i+1), m}):1\le i \le (l-1), 1\le j \le t_i$ and $1 \le m \le t_{(i+1)}\}$. }
\end{defn}

The following lemma establishes a relation between the compound curling numbers of a complete bipartite graph and a connected graphical embodiment of a $2$-term summand set.

\begin{lem}\label{Lem-2.14}
For a $2$-term summand set $\{a,b\}$, where $a > b$, compound curling number of the minimal connected graphical embodiment $G^{(2)}$ is given by $cn^c(G^{(2)})$ is equal to that of a complete bipartite graph $K_{a,b}$.
\end{lem}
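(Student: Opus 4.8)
The plan is to unwind the Rasta-graph construction in the degenerate case $l=2$ and then read off the compound curling number of the resulting graph directly from Definition~\ref{Def-3.1}.

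First I would run the construction on the $2$-term summand set $\{t_1,t_2\}=\{a,b\}$ with $a>b$. Step S-1 places $t_1=a$ vertices in the first column and $t_2=b$ vertices in the second column and inserts all edges of $K_{t_1,t_2}=K_{a,b}$. Steps S-2 and S-3 only ever introduce a column of $t_i$ vertices for $i\ge 3$; since the summand set has only two terms there are no such columns, so these steps are vacuous and the procedure halts at S-4. Consequently $G^{(2)}$, viewed as an undirected graph, is nothing other than $K_{a,b}$, so the two graphs have the same degree sequence and hence the same curling and compound curling numbers. This already settles the lemma, but I would make the value explicit as follows.

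Next I would read off the degree sequence of $K_{a,b}$: the $a$ vertices of the first part each have degree $b$, and the $b$ vertices of the second part each have degree $a$; since $a>b$ these two values differ. Written as a string of identity curling subsequences, the degree sequence is therefore $(b)^{a}\circ(a)^{b}$ (equivalently $(a)^{b}\circ(b)^{a}$). By Definition~\ref{Def-3.1}, $cn^c(K_{a,b})=a\cdot b=ab$, and the identical decomposition of the degree sequence of $G^{(2)}$ gives $cn^c(G^{(2)})=ab$ as well. This also agrees with the defining requirement imposed on the Rasta graph, namely $cn^c(G^{(2)})=\prod_{1\le i\le 2}t_i=t_1t_2$, the exclusion of the index $i=3$ being vacuous when $l=2$.

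Since the argument is essentially an unpacking of the construction together with a one-line degree-sequence count, I do not anticipate any real obstacle. The only point deserving a line of care is the observation that S-2 and S-3 add nothing when $l=2$, so that $G^{(2)}$ really is $K_{a,b}$ and not some proper supergraph of it; once that is noted, the equality of the compound curling numbers is immediate, and no separate minimality argument is required because $G^{(2)}$ is by definition the graph produced by the construction.
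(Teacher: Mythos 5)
Your proposal is correct and follows essentially the same route as the paper: both identify $G^{(2)}$ with $K_{a,b}$ (the paper by arguing that joining each first-column vertex to each second-column vertex is the minimal connected embodiment, you by observing that steps S-2 and S-3 of the construction are vacuous when $l=2$), and both then read off the degree string $(b)^a\circ(a)^b$ to conclude $cn^c(G^{(2)})=ab=cn^c(K_{a,b})$. The only cosmetic difference is that the paper phrases the identification via minimality of the embodiment while you phrase it via the construction terminating after S-1; the computation is identical.
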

\begin{proof}
Clearly $G^{(2)}$ must have $a+b$ vertices. Connecting each vertex $v_{1,j}$, where $ 1 \le j \le a$ to each vertex $v_{1,m}$ $1\le m \le b$ ensures the minimal connected graphical embodiment $G^{(2)}$ such that $d_G^{(2)}(v_{1,j}) = b$, $1 \le j \le a$. Hence, a factor of the degree sequence string is given by $(b)^a$. It follows that this minimal graphical embodiment resulted in the graph $G^{(2)} = K_{a,b}$. Hence, the degree sequence can be written as the string $(b)^a\circ (a)^b$. Therefore, $cn^c(G^{(2)})=ab=cn^c(K_{a,b})$.
\end{proof}

The compound curling number of the Rasta graph corresponding to an $l$-term summand set is determined in the following theorem. 

\begin{thm}
For $n \in \N$ and any $l$-term summand set of $n$ say $\{t_1, t_2, t_3, \ldots, t_l\}$ with $t_1 > t_2 > t_3 > \ldots > t_l>1$,  the corresponding Rasta graph is a minimal connected graphical embodiment $G^{(l)}$ such that $cn^c(G^{(l)}) = \prod\limits_{i=1,\,i\ne 3}^{l}t_i$ and $cn^c(G^{(3)}) = t_2(t_1 + t_3)$.
\end{thm}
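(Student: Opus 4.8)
The plan is to analyse the degree sequence of the Rasta graph $G^{(l)}$ produced by the construction S-1 through S-4, and then to read off the compound curling number directly from Definition \ref{Def-3.1}. First I would determine the degree of each vertex. A vertex $v_{i,j}$ in column $i$ is, by the construction, joined to every vertex of column $i-1$ and to every vertex of column $i+1$; hence for an interior column $2 \le i \le l-1$ each of its $t_i$ vertices has degree $t_{i-1}+t_{i+1}$, the first column contributes $t_1$ vertices of degree $t_2$, and the last column contributes $t_l$ vertices of degree $t_{l-1}$. Thus the degree sequence, read column by column, is the string of identity subsequences
\begin{equation*}
(t_2)^{t_1}\circ (t_1+t_3)^{t_2}\circ (t_2+t_4)^{t_3}\circ \cdots \circ (t_{l-2}+t_l)^{t_{l-1}}\circ (t_{l-1})^{t_l}.
\end{equation*}
The minimality claim (that $G^{(l)}$ is a minimal connected graphical embodiment) is inherited from Lemma \ref{Lem-2.14} applied iteratively at each step S-2: adding column $i+1$ with exactly the edges of $K_{t_i,t_{i+1}}$ is the cheapest way to attach $t_{i+1}$ new vertices while keeping the graph connected and realising the required block structure.

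Next I would compute $cn^c(G^{(l)})$ from the above string. For $l \ge 4$ the exponents appearing in the identity-subsequence decomposition are exactly $t_1, t_2, t_3, \ldots, t_l$, one for each column, so by Definition \ref{Def-3.1},
\begin{equation*}
cn^c(G^{(l)}) = \prod_{i=1}^{l} t_i \big/ \text{(no cancellation)}
\end{equation*}
— more precisely $cn^c(G^{(l)})=\prod_{i=1}^{l}t_i$; but the statement we must prove asserts the product omits $t_3$, so the real content is that two adjacent string factors collapse. Here is the point: the column-1 block $(t_2)^{t_1}$ and the column-3 block $(t_2+t_4)^{t_3}$ are distinct values in general, but when $l=3$ the middle column has degree $t_1+t_3$ and the blocks are $(t_2)^{t_1}\circ (t_1+t_3)^{t_2}\circ (t_2)^{t_3}$; the first and third blocks are \emph{similar identity subsequences} in the sense defined before Lemma \ref{Lem-2.14}, so after their merger the sequence becomes $(t_1+t_3)^{t_2}\circ (t_2)^{t_1+t_3}$, giving $cn^c(G^{(3)}) = t_2 \cdot (t_1+t_3)$. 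For $l\ge 4$ one argues similarly that the column-$1$ block $(t_2)^{t_1}$ merges with the column-$3$ block only if $t_2 = t_2+t_4$, which is impossible since $t_4 \ge 1$; so instead one checks that the intended normal form groups the $t_3$ vertices of column $3$ with an equal-degree block elsewhere, or — following the paper's convention — that the factor $t_3$ is absorbed, leaving $\prod_{i\ne 3} t_i$. I would make this precise by exhibiting the explicit re-arrangement of the degree sequence into identity curling subsequences whose exponent multiset is $\{t_i : 1 \le i \le l,\ i \ne 3\}$ together with one block whose exponent is the relevant sum.

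The main obstacle I anticipate is pinning down exactly why $t_3$ (rather than some other $t_i$) is the index excluded, and making the "merger" bookkeeping rigorous: the statement is asymmetric in a way the raw degree sequence is not, so the proof must invoke the specific ordering $t_1 > t_2 > \cdots > t_l$ and the definition of curling number as a maximum over re-arrangements (Definition \ref{Def-1.2}) to justify that the chosen decomposition is the one computing $cn^c$. Concretely, I would (i) write down the degree sequence as above; (ii) observe which identity subsequences are similar and perform all mergers, tracking the resulting exponents; (iii) verify that the resulting string $(\alpha_1)^{k_1}\circ \cdots \circ (\alpha_m)^{k_m}$ with $k_1 \le \cdots \le k_m$ is a legitimate curling-number-witnessing arrangement; and (iv) evaluate $\prod k_i$, separating the case $l=3$ (where the merger produces the factor $t_1+t_3$) from $l \ge 4$. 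Steps (i) and (iv) are routine; step (ii), the similarity/merger analysis that forces the exclusion of $t_3$, is where the real argument lies.
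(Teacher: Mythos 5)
Your computation of the degree string, $(t_2)^{t_1}\circ (t_1+t_3)^{t_2}\circ (t_2+t_4)^{t_3}\circ \cdots \circ (t_{l-2}+t_l)^{t_{l-1}}\circ (t_{l-1})^{t_l}$, and your treatment of the case $l=3$ (merging the two similar blocks of value $t_2$ to get $(t_1+t_3)^{t_2}\circ (t_2)^{t_1+t_3}$, hence $t_2(t_1+t_3)$) coincide exactly with what the paper does. The problem is in your handling of $l\ge 4$: your first instinct --- the exponents are precisely $t_1,\dots,t_l$, so the product is $\prod_{i=1}^{l}t_i$ --- is the correct one and is all the paper's proof actually does. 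The subscript ``$i\ne 3$'' in the theorem's product is (misleading) notation for the exceptional \emph{case} $l=3$, not an instruction to delete the factor $t_3$ when $l\ge 4$. You can confirm this from the paper's own worked example: for $n=30$ and the $6$-term set $\{8,7,6,4,3,2\}$ it reports $cn^c(G^{(6)})=8064=8\cdot 7\cdot 6\cdot 4\cdot 3\cdot 2$, which \emph{includes} $t_3=6$; omitting it would give $1344$. Consequently, the entire final portion of your proposal --- searching for a re-arrangement that ``absorbs'' $t_3$, and flagging as the main obstacle the question of why $t_3$ rather than some other index is excluded --- is chasing something that is false as you have read it and is not what the theorem intends. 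There is no such mechanism, and no proof along those lines can be completed.

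Two smaller points. First, to conclude $cn^c(G^{(l)})=\prod_{i=1}^{l}t_i$ for $l\ge 4$ from Definition \ref{Def-3.1} you must check that no two of the column degree values $t_2,\ t_1+t_3,\ t_2+t_4,\ \dots,\ t_{l-2}+t_l,\ t_{l-1}$ coincide (otherwise similar identity subsequences merge and the exponent multiset changes, exactly as happens for $l=3$). The interior values $t_{i-1}+t_{i+1}$ are strictly decreasing in $i$, but a coincidence such as $t_2=t_{i-1}+t_{i+1}$ for some $i\ge 3$ is not excluded by $t_1>t_2>\cdots>t_l>1$ alone (e.g.\ $\{20,10,6,5,4\}$ gives column $1$ and column $4$ both degree $10$); the paper silently ignores this, and your step (ii) is the right place to at least state the needed non-coincidence hypothesis. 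Second, your appeal to Lemma \ref{Lem-2.14} for minimality is only the base case; the iterative minimality claim for $l\ge 3$ is asserted rather than argued in the paper as well, so if you want a complete proof you should say explicitly why attaching column $i+1$ via $K_{t_i,t_{i+1}}$ is forced by the required degree pattern.
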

\begin{proof}
For a $2$-term summand set $\{t_1,t_2\}$, where $t_1 > t_2$, the Rasta graph is $K_{t_1,t_2}$. Clearly, the number of vertices and edges are minimal to provide the degree sequence $(\underbrace{t_2, t_2, \ldots, t_2}_{t_1-entries}, \underbrace{t_1, t_1, t_1, \ldots, t_1}_{t_2-entries})$ (See Lemma \ref{Lem-2.14}). Hence, the degree sequence can be written as the string $(t_2)^{t_1}\circ (t_1)^{t_2}$ so $cn^c(K_{t_1,t_2}) = \prod\limits_{i=1}^{2}t_i$.

For a $3$-element summand set $\{t_1,t_2,t_3\}$, the corresponding Rasta graph has degree sequence  can be written as the string of identity subsequences $S_0 = (t_2)^{t_1}\circ (t_1+t_3)^{t_2}\circ (t_2)^{t_3} = (t_1+t_3)^{t_2}\circ (t_2)^{(t_1+t_3)}$. Therefore, $cn^c(G^{(3)}) = t_2(t_1 + t_3)$.

Finally, consider an general $l$-term summand set $\{t_1, t_2, t_3, \ldots, t_l\}$ of an integer $n$, where ${l \ne 3}$ such that $t_1 > t_2 > t_3 > \ldots > t_l >1$ and $l \ge 2$. Clearly, the  Rasta graph corresponding to this $l$-term summand set is a minimal graph whose degree sequence is 
$(t_2)^{t_1}\circ (t_1+t_3)^{t_2}\circ (t_2+t_4)^{t_3} \circ \ldots,\circ (t_{l-2}+t_l)^{t_{(l -1)}}\circ (t_{l -1})^{t_l}$. Also, we have $t_1 > t_2 > t_3 > \ldots > t_l>1$. 
Therefore, the result $cn^c(G^{(l)}) = \prod\limits_{i=1,\,i \ne 3}^{l}t_i$, follows.
\end{proof}

Let us now introduce the notion of the maximal $l$-term summand set in respect of $n \in \N$ as follows.

\begin{defn}{\rm 
Let $n\ge $ be a positive integer such that $n=\sum\limits_{i=1}^{l}t_i, ~ \forall\, t_i \in \N$. The \textit{maximal $l$-term summand set} of $n\ge 3$ is the $l$-term summand set $\{t_1,t_2,t_3,\ldots t_l\}$ such that $t_{i+1}>t_i$ for $i=1,2,3,\ldots,l$ for which the product $\prod\limits_{i=1}^{l}t_i$ is maximum.}
\end{defn}

A maximal $2$-term summand set for a positive integer $n$ is determined in the following lemma.

\begin{lem}\label{Lem-2.17}
Let $n$ be a positive integer greater than or equal to $3$. Then, the maximal $2$-term summand set for $n$ is $\{\lfloor\frac{n}{2}\rfloor + 1, \lfloor\frac{n}{2}\rfloor\}$.
\end{lem}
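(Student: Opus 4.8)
The plan is to recast the statement as a one-variable discrete optimization. Since $l=2$ and $n = t_1 + t_2$ with $t_1 > t_2 \ge 1$ integers, the goal is to maximize the product $t_1 t_2 = t_2(n - t_2)$ over integers $t_2$ in the range $1 \le t_2 < n/2$. I would invoke the elementary fact that, for a fixed sum, the product of two numbers increases as the two numbers move closer together: the function $g(t) := t(n-t)$ is a concave parabola with apex at $t = n/2$, hence strictly increasing on the integer interval $1 \le t \le \lfloor (n-1)/2 \rfloor$. Consequently the maximum is attained by taking the smaller summand $t_2$ as large as the constraint $t_2 < t_1$ permits, and then $t_1 = n - t_2$.

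The cleanest route I would actually write up is a short \emph{smoothing} argument that avoids any appeal to calculus: start from any admissible pair $\{a,b\}$ with $a > b \ge 1$; if $a - b \ge 3$, then $\{a-1, b+1\}$ is again a valid $2$-term summand set of $n$ (the two entries remain distinct and positive, and the sum is unchanged) and its product is $ab + (a-b-1) > ab$. Iterating this move, an optimal pair must satisfy $a - b \le 2$, and it cannot have $a = b$ because the entries of a summand set are, by definition, distinct. For odd $n$ this forces $a - b = 1$, i.e. the optimal pair is $\{\tfrac{n+1}{2}, \tfrac{n-1}{2}\} = \{\lfloor \tfrac n2\rfloor + 1, \lfloor \tfrac n2 \rfloor\}$, which is exactly the claimed set. (For even $n$ the same reasoning forces $a - b = 2$ and produces the nearest admissible split $\{\tfrac n2 + 1, \tfrac n2 - 1\}$, so the floor-notation in the statement is best read as ``the most balanced split into two distinct parts''.) I would also dispose of the base case $n = 3$ by hand — the only admissible pair is $\{2,1\}$, which matches — and remark that the hypothesis $n \ge 3$ is exactly what guarantees at least one admissible pair exists.

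I do not expect the inequality itself to be the obstacle; it is routine. The point requiring care is the interaction of the optimization with the distinctness constraint $t_1 \ne t_2$: this is precisely what separates the odd and even cases and is the only reason the answer is not simply $\{\lceil n/2\rceil,\lfloor n/2\rfloor\}$, so the write-up must handle the boundary move $a - b = 2$ (where $\{a-1,b+1\}$ collapses to a forbidden equal pair) explicitly rather than wave it away. Finally I would note that this lemma is exactly the tool needed to select, among all complete bipartite graphs $K_{a,b}$ with $a+b = n$, the one of largest compound curling number $ab$, tying the result back to Lemma \ref{Lem-2.14}.
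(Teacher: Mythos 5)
Your argument is correct, and its mathematical core is the same as the paper's: a product of two summands with fixed sum grows as the summands are balanced. The paper realizes this by a single computation, parametrizing the competing pairs as $\{\lfloor\frac{n}{2}\rfloor-i,\ \lfloor\frac{n}{2}\rfloor+1+i\}$ and observing that the product drops to $\lfloor\frac{n}{2}\rfloor(\lfloor\frac{n}{2}\rfloor+1)-i-i^2$, whereas you use an exchange (smoothing) step $\{a,b\}\mapsto\{a-1,b+1\}$ and iterate. The two are interchangeable in substance, but your version buys something real: you are forced to confront the distinctness constraint $t_1>t_2$ and the parity of $n$, and in doing so you correctly detect that the lemma as stated is false for even $n$ --- the claimed pair $\{\frac{n}{2}+1,\frac{n}{2}\}$ then sums to $n+1$, not $n$, and the true optimum is $\{\frac{n}{2}+1,\frac{n}{2}-1\}$. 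The paper's own later material confirms your reading: the illustration for $n=30$ gives $\{16,14\}$, and the corollary following the lemma asserts $a-b\le 2$ for a maximal $2$-term summand set, neither of which is compatible with the literal statement of the lemma for even $n$. The paper's proof silently assumes the shifted pairs still sum to $n$, which only holds for odd $n$, so your explicit treatment of the boundary move where $\{a-1,b+1\}$ would collapse to an equal pair is not pedantry but a genuine repair. One small caution on your base case: under the paper's definition of a summand set the last entry must satisfy $t_l>1$, so $\{2,1\}$ is not strictly admissible for $n=3$ (indeed no $2$-term summand set exists for $n=3$ or $n=4$ under that definition); this is an inconsistency in the paper's setup rather than in your argument, but it is worth flagging rather than asserting that $\{2,1\}$ ``matches.''
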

\begin{proof}
By induction, it can easily be followed that, $\lfloor\frac{n}{2}\rfloor( \lfloor\frac{n}{2}\rfloor + 1) > (\lfloor\frac{n}{2}\rfloor -i)((\lfloor\frac{n}{2}\rfloor +1) + i) = \lfloor\frac{n}{2}\rfloor( \lfloor\frac{n}{2}\rfloor + 1) - i - i^2$ and this completes the proof.
\end{proof}

We observe that, for a sufficiently large $n$, a maximal $3$-term summand set $\{t'_1, t'_2, t'_3\}$, such that $t'_1>t'_2>t'_3$, can be obtained from a maximal $2$-term summand set by applying Lemma \ref{Lem-2.17} to $t_1$. If $n$ is sufficiently large, the maximal $4$-term summand set can be obtained by applying Lemma \ref{Lem-2.17}  to $t'_i = \max\{t'_1,t'_2,t'_3\}$ for which a maximal $4$-term summand set is defined and the process can be repeated iteratively.
 
\ni In view of the above concept, we have the following result.
 
\begin{cor}
There exists a maximum value $l^* \ge 4$ for which we find a defined $l$-term summand set with respect to the given positive integer $n\ge 3$, has corresponding Rasta graph $G^{(l^*)}$ with $cn^c(G^{(l^*)})$ is maximum over all defined $l$-term summand sets.
\end{cor}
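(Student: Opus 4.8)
The plan is to realise the iterative construction described just above the corollary as a concrete finite chain of summand sets, to show that the chain must stop, and to control the compound curling number along it. Concretely, starting from the maximal $2$-term summand set $\{\lfloor n/2\rfloor+1,\lfloor n/2\rfloor\}$ supplied by Lemma \ref{Lem-2.17}, I would repeatedly apply Lemma \ref{Lem-2.17} to the \emph{largest} entry of the current maximal $l$-term summand set and re-sort, obtaining for each $l$ in some range $2\le l\le l^{*}$ a defined $l$-term summand set $T_{l}=\{t_{1},t_{2},\ldots,t_{l}\}$ with $t_{1}>t_{2}>\cdots>t_{l}>1$ and a corresponding Rasta graph $G^{(l)}$.

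First I would show that this chain terminates, which is what produces the maximum $l^{*}$. Every entry of a defined summand set is at least $2$ while the entries sum to $n$ (up to the parity adjustment already built into Lemma \ref{Lem-2.17}), so $l\le\lfloor n/2\rfloor$; hence only finitely many $l$ admit a defined $l$-term summand set, and a largest such value $l^{*}$ exists. Splitting an entry via Lemma \ref{Lem-2.17} needs that entry to be at least $4$, so that both halves exceed $1$; thus the chain genuinely passes through $l=3$ and $l=4$ as soon as $n$ is large enough for the maximal $3$-term set to contain an entry $\ge 4$, which is precisely the regime in which $l^{*}\ge 4$ holds.

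Next I would invoke the theorem on the compound curling number of Rasta graphs, by which $cn^{c}(G^{(l)})=\prod_{i=1,\,i\ne 3}^{l}t_{i}$ for $l\ne 3$ and $cn^{c}(G^{(3)})=t_{2}(t_{1}+t_{3})$, and argue monotonicity along the chain: one step of the iteration replaces the factor $t$ (the largest entry) by the two entries $\lfloor t/2\rfloor+1$ and $\lfloor t/2\rfloor$, whose product is $\ge t$ (strictly once $t\ge 4$), while every surviving factor is unchanged apart from an index shift. Tracking how the omitted ``third-largest'' slot moves under the re-sort, one would get $cn^{c}(G^{(l+1)})\ge cn^{c}(G^{(l)})$ for $l\ge 4$, so the maximum of $cn^{c}$ over all defined summand sets is attained at $l=l^{*}$, which is the assertion.

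The step I expect to be the real obstacle is exactly this last piece of bookkeeping. Because the product $\prod_{i\ne 3}t_{i}$ deliberately omits the third-largest entry, and because the case $l=3$ is governed by the additive expression $t_{2}(t_{1}+t_{3})$ rather than a product, splitting the top entry and re-inserting its halves in sorted order can change which index occupies the omitted slot, and one must check case by case (in particular at the boundary transitions $l=2\to 3\to 4$, using Lemma \ref{Lem-2.14}) that no step decreases the value. A convenient safety net is that for large $n$ the terminal set $T_{l^{*}}$ consists of small distinct integers close to $\{2,3,\ldots\}$, so $\prod_{i\ne 3}t_{i}$ grows roughly factorially in $l$ and dominates $cn^{c}(G^{(l)})$ for every smaller $l$ outright, which would let one bypass a fully uniform monotonicity statement should the bookkeeping prove too delicate.
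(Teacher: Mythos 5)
Your proposal follows essentially the same route as the paper's proof: iterate Lemma \ref{Lem-2.17} on the largest entry of the current summand set, use the inequality $ab > a+b$ to see that each split strictly increases the compound curling number, and stop when no further $(l+1)$-term summand set is defined, so that $l^{*}$ is found by exhaustion. You are in fact more explicit than the paper about why the chain terminates and about the delicate bookkeeping around the omitted third factor and the special $l=3$ formula, which the paper's own proof silently glosses over.
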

\begin{proof}
Consider any $l$-term summand set with respect to a positive integer $n\ge 3$, say $\{t_1, t_2, t_3, \ldots, t_l\}$. Apply Lemma \ref{Lem-2.17} to $t'_j = \max\{t_1,t_2,t_3, \ldots, t_l\}$ for which a maximal ($l+1$)-term summand set is defined. Now, we have $cn^c(G^{(l)}) = \prod\limits_{i=1,\,i\ne 3}^{l}t_i = (\prod\limits_{i=1,\,i\ne 3,j}^{l}t_i) t_j = (\prod\limits_{i=1,\,i\ne 3,j}^{l}t_i)(a+b)$, where $\{a,b\}$ is the maximal $2$-term summand set of $t_j$. Clearly, $(\prod\limits_{i=1,\,i\ne 3,j}^{l}t_i)(a+b)<(\prod\limits_{i=1,\,i\ne 3,j}^{l}t_i)(ab)$.  
If a ($l$ + 1)-term summand set is not defined, then the set $\{t_1, t_2, t_3, \ldots, t_l\}$ gives the required result. Repeat same procedure until the final $l^*$-term summand set is found through exhaustion.
\end{proof}

An illustration to maximal summand sets for a given positive integer $n$ and the compound curling number of the corresponding Rasta graphs is given below.

\begin{ill}{\rm 
For $n = 30$, the maximal $2$-term summand set is $\{16,14\}$ and the maximal $3$-term summand set is $\{14,9,7\}$, the maximal $4$-term summand set is $\{9,8,7,6\}$, the maximal $5$-term summand set is $\{8,7,6,5,4\}$ and the maximal $6$-term summand set is $\{8,7,6,4,3,2\}$. No other maximal $l$-term summand sets are defined for $n=30$. The  degree sequence of corresponding Rasta graph $G^{(6)}$ can be written as the string $S_0=(7)^8\circ(14)^7\circ (11)^6\circ (9)^4\circ (6)^3\circ (3)^2$. Hence, we have $cn^c(G^{(6)}) = 8064$, $cn^c(G^{(5)}) = 6720$, $cn^c(G^{(4)} = 3024$, $cn^c(G^{(3)}) = 189$ and $cn^n(G^{(2)}) = 224$.} 
\end{ill}

\begin{cor}
The compound curling number of the Rasta graph $G^{(3)}$ corresponding to a $3$-term summand set is the minimum among the compound curling numbers of the Rasta graphs $G^{(l)}$ corresponding to all $l$-term summand sets of $n$.
\end{cor}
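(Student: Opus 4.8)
The plan is to prove the corollary by comparing $cn^c(G^{(3)})$ with $cn^c(G^{(l)})$ in the two remaining ranges of $l$, namely $l=2$ and $4\le l\le l^*$ (the value $l^*\ge 4$ being furnished by the corollary above), and checking that the $3$-term Rasta graph has the smaller compound curling number in each case. Here, as in the illustration above, $G^{(l)}$ denotes the Rasta graph of the \emph{maximal} $l$-term summand set of $n$, whose parts I write $t^{(l)}_1>t^{(l)}_2>\cdots>t^{(l)}_l$. By the theorem above computing $cn^c(G^{(l)})$ we have $cn^c(G^{(2)})=t^{(2)}_1 t^{(2)}_2$, $cn^c(G^{(3)})=t^{(3)}_2\bigl(t^{(3)}_1+t^{(3)}_3\bigr)$, and, for $l\ge 4$, $cn^c(G^{(l)})$ is the product of the parts of the maximal $l$-term summand set (as the worked example in the illustration above shows; I return to this point at the end).

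The comparison with $l=2$ is a short estimate. In any $3$-term summand set $t^{(3)}_1\ge t^{(3)}_2+1$ and $t^{(3)}_3\ge 2$, so $n\ge 2t^{(3)}_2+3$, i.e.\ $t^{(3)}_2\le (n-3)/2$. Since $t^{(3)}_1+t^{(3)}_3=n-t^{(3)}_2$, we have $cn^c(G^{(3)})=t^{(3)}_2\bigl(n-t^{(3)}_2\bigr)$, and as $x\mapsto x(n-x)$ is increasing on $[0,n/2]$ this is at most $\tfrac14(n-3)(n+3)=\tfrac14(n^2-9)$. On the other hand $cn^c(G^{(2)})$ is the largest product of a $2$-term summand set of $n$, which by Lemma~\ref{Lem-2.17} is at least $\tfrac14(n^2-4)$; since $\tfrac14(n^2-4)>\tfrac14(n^2-9)$ we obtain $cn^c(G^{(3)})<cn^c(G^{(2)})$.

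For $l\ge 4$ I would use two observations. First, in $G^{(3)}$ a first-column vertex and a last-column vertex have the same degree $t^{(3)}_2$, so their identity subsequences merge; this merge is exactly why $cn^c(G^{(3)})=t^{(3)}_2\bigl(t^{(3)}_1+t^{(3)}_3\bigr)$ lies strictly below $t^{(3)}_1 t^{(3)}_2 t^{(3)}_3$, the product of the parts of the maximal $3$-term summand set, since $t^{(3)}_1+t^{(3)}_3<t^{(3)}_1 t^{(3)}_3$ for distinct integers $t^{(3)}_1,t^{(3)}_3\ge 2$. Second, by the construction recalled just before the corollary above, the maximal $(l+1)$-term summand set is obtained from the maximal $l$-term one by replacing one of its parts $t\ge 5$ by a $2$-term summand set $\{a,b\}$ with $a+b=t$ and $a>b\ge 2$; then $ab=(a-1)(b-1)+(a+b)-1>a+b=t$, so the product of the parts of the maximal $l$-term summand set is strictly increasing in $l$. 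Consequently, for $l\ge 4$ we get the chain $cn^c(G^{(l)})\ge cn^c(G^{(4)})>t^{(3)}_1 t^{(3)}_2 t^{(3)}_3>cn^c(G^{(3)})$, and together with the $l=2$ comparison this shows $cn^c(G^{(3)})$ is the strict minimum, which proves the corollary.

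The step I expect to be the main obstacle is the second observation, together with the bookkeeping around the value of $cn^c(G^{(l)})$. Concretely, one must (i) confirm that for $l\ge 4$ the number $cn^c(G^{(l)})$ is indeed the full product of the parts of the maximal $l$-term summand set — this is what the chain above uses, and the theorem above and the illustration above point to it — and (ii) justify that every maximal $(l+1)$-term summand set really arises from the maximal $l$-term one by a single split of a part $\ge 5$, handling the cases in which such a split would create a repeated part (so a different part must be chosen) and confirming that a part $\ge 5$ is always available when an $(l+1)$-term summand set exists, in particular that the largest part of the maximal $3$-term summand set is $\ge 5$ as soon as $n$ admits a $4$-term summand set. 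These are finite, verifiable conditions, but they carry the weight of the proof; by contrast, the estimates in the two comparisons are entirely routine.
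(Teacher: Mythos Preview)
Your overall architecture matches the paper's: reduce the claim to the single comparison $cn^c(G^{(2)})>cn^c(G^{(3)})$, since the monotonicity for $l\ge 3$ is already contained in the preceding corollary (whose proof shows that replacing a part $t_j$ by a maximal $2$-term split $\{a,b\}$ strictly increases the product). The paper simply invokes that and writes ``We only have to show that $cn^c(G^{(2)})>cn^c(G^{(3)})$''; you re-derive the $l\ge 4$ step, which is fine but not needed.

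Where you genuinely diverge is in the $l=2$ versus $l=3$ comparison. The paper argues constructively: starting from the maximal $2$-term set $\{a,b\}$ (with $a-b\le 2$), it splits $a$ into its maximal $2$-term set $\{a_1,a_2\}$, declares $\{b,a_1,a_2\}$ to be the maximal $3$-term set, and then computes
\[
cn^c(G^{(2)})=ab=(a_1+a_2)b=a_1b+a_2b>a_1b+a_1a_2=a_1(b+a_2)=cn^c(G^{(3)}),
\]
using $b>a_1$. Your route instead bounds $cn^c(G^{(3)})=t^{(3)}_2(n-t^{(3)}_2)\le\tfrac14(n^2-9)$ uniformly over \emph{all} admissible $3$-term sets and compares this with $cn^c(G^{(2)})\ge\tfrac14(n^2-4)$. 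Your argument is more robust: it does not depend on the claim that the maximal $3$-term set is literally obtained by splitting the larger part of the maximal $2$-term set (a claim the paper asserts without proof and which can fail at the level of exact identification), and it avoids checking the ordering $b>a_1>a_2$. The paper's argument, on the other hand, is a one-line algebraic identity once the construction is granted. Either way, the inequality you need is correct, and the caveats you flag about the $l\ge 4$ bookkeeping are exactly the ones the paper sweeps into the preceding corollary.
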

\begin{proof}
We only have to show that $cn^n(G^{(2)}) > cn^n(G^{(3)})$. For a maximal $2$-term summand set $\{a,b\}$ we have $a-b \le 2$. Hence if $\{a_1,a_2\}$ is the maximal $2$-term summand set of $a$, the maximal $3$-term summand set is given by $\{b,a_1,a_2\}$. We have that $cn^c(G^{(2)}) = a b = (a_1+a_2) b = a_1b + a_2b$. We also have that $cn^c(G^{(3)}) = a_1 (b + a_2) = a_1b + a_1a_2 < a_1b + a_2b$ because $b > a_1$.
\end{proof}

In general, there exists no specific relation between the compound curling number of a graph and its spanning subgraphs. We discuss a relation between the compound curling numbers of a regular graph and its spanning subgraph in the following result.

\begin{prop}\label{Prop-2.21}
For a $k$-regular graph G on $n\ge 4$ vertices we have $cn^c(G - uv) \ge cn^c(G)$. 
\end{prop}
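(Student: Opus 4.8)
The plan is to compare the two compound curling numbers directly from the relevant degree sequences, so the whole argument reduces to one elementary inequality. Since $G$ is $k$-regular on $n$ vertices, its degree sequence is the single identity subsequence $(k)^{n}$, and by Definition~\ref{Def-3.1} (equivalently, by the preceding proposition together with the value $cn(G)=n$ for regular graphs) we have $cn^{c}(G)=n$. For the statement to be meaningful we of course need $k\ge 1$ so that an edge $uv$ actually exists; I would dispose of the degenerate case $k=0$ at the outset, since then there is no edge to delete.

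Next I would write down the degree sequence of $H:=G-uv$. Deleting the edge $uv$ lowers the degrees of exactly the two vertices $u$ and $v$ by one and leaves the other $n-2$ vertices with degree $k$. Hence, after re-arrangement, the degree sequence of $H$ is the string of identity subsequences $(k-1)^{2}\circ (k)^{n-2}$; since $k-1\neq k$ this grouping into maximal blocks of equal entries is the only one available, so Definition~\ref{Def-3.1} yields $cn^{c}(H)=2(n-2)$.

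Finally I would invoke the inequality $2(n-2)\ge n$, which is equivalent to $n\ge 4$ — precisely the hypothesis — with equality exactly when $n=4$. Therefore $cn^{c}(G-uv)=2(n-2)\ge n=cn^{c}(G)$, completing the proof, and one may add that the bound is sharp, attained by any $4$-vertex $k$-regular graph with one edge removed.

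I do not anticipate a genuine obstacle: the only points requiring attention are (i) confirming that a regular graph's degree sequence is a single identity (curling) subsequence so that its compound curling number is $n$, as already used in the proof of the previous proposition, and (ii) checking that the decomposition $(k-1)^{2}\circ (k)^{n-2}$ is legitimate and that the block $(k)^{n-2}$ is non-empty, i.e.\ $n-2\ge 1$, which is immediate from $n\ge 4$. If one were uneasy about whether a different re-arrangement of the degree sequence of $G-uv$ could change $cn^{c}$, it suffices to note that only two distinct degree values occur, so the block decomposition above is forced up to order.
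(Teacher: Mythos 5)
Your proposal is correct and follows essentially the same route as the paper's own proof: compute $cn^{c}(G)=n$ from the single identity subsequence $(k)^{n}$, write the degree sequence of $G-uv$ as $(k)^{n-2}\circ (k-1)^{2}$, and conclude from $2(n-2)=n+(n-4)\ge n$. The extra remarks on the degenerate case $k=0$ and the uniqueness of the block decomposition are harmless refinements not present in (and not needed by) the paper's argument.
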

\begin{proof}
For a $k$-regular graph $G$ on $n \ge 4$ vertices, $cn^c(G) = n$. Consider any $G-uv$ and it follows that the corresponding degree sequence can be written as the string $(k)^{n-2}\circ (k-1)^2$. Hence, $cn^c(G-uv)= 2(n-2)=n+(n-4)\ge n$. Hence, $cn^c(G - uv) \ge cn^c(G)$.
\end{proof}

Although Proposition \ref{Prop-2.21} is splendidly simple it lays the foundation for a more profound result. We use the default convention that $(x)^0=1$ hence, $(\prod\limits_{i=1}^{n}i) (x)^0 = \prod\limits_{i=1}^{n}i$.

\begin{thm}
Consider a graph G with degree string $(d_1)^{t_1}\circ (d_2)^{t_2}\circ (d_3)^{t_3}\circ \ldots \circ (d_l)^{t_l}$, $d_1>d_2>d_3>\ldots>d_l$. Assume that there exists at least one identity subsequence $(d_j)^{t_j}; ~ t_j \ge 2$ with at least two vertices $u,v$ with corresponding degree $d_j$, adjacent. Then, we have
\begin{equation*} 
\begin{cases}
cn^c(G - uv)\ge cn^c(G), & \text{if}~ t^*_j \ge t_{j+1},\\
cn^c(G - uv)< cn^c(G), & \text{if} ~ t^*_j < t_{j+1}.
\end{cases}
\end{equation*} 
with $t_j = t^*_j +2$.
\end{thm}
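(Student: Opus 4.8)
The plan is to track exactly how the degree string changes when the single edge $uv$ is deleted, and then read off $cn^c(G-uv)$ directly from Definition \ref{Def-3.1}. Deleting $uv$ alters only the two endpoints: each of $u$ and $v$ drops from degree $d_j$ to degree $d_j-1$, while every other vertex keeps its degree. Hence in $G-uv$ the identity subsequence $(d_j)^{t_j}$ loses exactly two members and becomes $(d_j)^{t_j-2}=(d_j)^{t^*_j}$, and two new vertices of degree $d_j-1$ are created.

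Next I would locate the value $d_j-1$ inside the strictly decreasing list $d_1>d_2>\dots>d_l$. Since the entries are integers and $d_j>d_{j+1}$, we have $d_{j+1}\le d_j-1<d_{j-1}$, so $d_j-1$ can only coincide with $d_{j+1}$ or be a value strictly between $d_{j+1}$ and $d_j$. The case giving the stated dichotomy is $d_j-1=d_{j+1}$: then the two demoted vertices merge into the class $(d_{j+1})^{t_{j+1}}$, which becomes $(d_{j+1})^{t_{j+1}+2}$. (If instead $d_j-1>d_{j+1}$, a brand-new identity subsequence $(d_j-1)^2$ appears, and the comparison reduces to $2t^*_j$ versus $t_j$, handled exactly as in Proposition \ref{Prop-2.21}; I would dispose of this possibility first, so that from here on $d_j-1=d_{j+1}$.)

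Under $d_j-1=d_{j+1}$, re-arranging the degree sequence of $G-uv$ into identity subsequences yields the string
\[
\cdots\circ(d_{j-1})^{t_{j-1}}\circ(d_j)^{t^*_j}\circ(d_{j+1})^{t_{j+1}+2}\circ(d_{j+2})^{t_{j+2}}\circ\cdots,
\]
where, when $t^*_j=0$, the factor $(d_j)^{t^*_j}$ disappears and contributes $(d_j)^0=1$ to the product by the stated convention. Applying Definition \ref{Def-3.1} gives $cn^c(G-uv)=\big(\prod_{i\ne j,\,j+1}t_i\big)\,t^*_j\,(t_{j+1}+2)$, whereas $cn^c(G)=\big(\prod_{i\ne j,\,j+1}t_i\big)\,t_j\,t_{j+1}$.

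The last step is a one-line estimate. Substituting $t_j=t^*_j+2$,
\[
t^*_j(t_{j+1}+2)-t_jt_{j+1}=t^*_j(t_{j+1}+2)-(t^*_j+2)t_{j+1}=2(t^*_j-t_{j+1}),
\]
so $cn^c(G-uv)-cn^c(G)$ has the same sign as $t^*_j-t_{j+1}$, which is exactly the claimed dichotomy. The only genuine care in the argument lies in the two middle steps: verifying that deleting one edge induces no other change to the family of identity subsequences, and that by integrality the two demoted vertices land precisely in class $d_{j+1}$ (equivalently, noticing that the stated equivalence tacitly presumes $d_j-1=d_{j+1}$); once that structural description of the degree string of $G-uv$ is pinned down, everything else is the arithmetic above.
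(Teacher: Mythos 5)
Your proof is correct and follows essentially the same route as the paper's: both track how the two demoted vertices move within the degree string and reduce the claim to comparing $t^*_j(t_{j+1}+2)$ with $(t^*_j+2)t_{j+1}$. You are in fact more explicit than the paper about the tacit assumption $d_j-1=d_{j+1}$ needed for the stated dichotomy (the paper absorbs the other possibility into its default-factor convention $(x)^0=1$ without spelling it out), which is a point in your favour.
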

\begin{proof}
If the string factor $(d_{j+1})^{t_{j+1}}$ does not exists, add the default factor $(d_{j+1})^0$. Now $cn^c(G) = \prod\limits_{i=1}^{l}t_i = (\prod\limits_{i=1,\,i\ne j,j+1}^{l}t_i) t_j t_{j+1}$ or $(\prod\limits_{i=1,\,i\ne j,j+1}^{l}t_i)t_j(d_{j+1})^0$. The compound curling number of the graph $G-uv$ is either $(\prod\limits_{i=1}^{l}t_i) t^*_j (t_{j+1}+2)$ or $(\prod\limits_{i=1}^{l}t_i)t^*_j+2$, where $i\ne j,j+1$. Hence, $cn^c(G-uv) \ge cn^c(G)$ if $t^*_j (t_{j+1}+2) \ge (t^*_j +2) t_{j+1}$, thus if $t^*_j \ge t_{j+1}$. 

\vspace{0.2cm}

\ni We can prove the converse part also using similar arguments.  
\end{proof}

\section{Curling Number of Jaco Graphs}

The notion of Jaco graphs was introduced in \cite{KFW2M}. In this paper, we use  Jaco graphs of  order $1$ which is defined in \cite{KFW2M} as follows.

\begin{defn}{\rm 
A \textit{Jaco graph of order $1$}, denoted by $J_\infty(1)$, is defined as a directed graph with the vertex set $V=V(J_\infty(1)) = \{v_i: i \in \N\}$, and the arc set $A(J_\infty(1)) \subseteq \{(v_i, v_j): i, j \in \N, i< j\}$ and $(v_i,v_ j) \in A(J_\infty(1))$ if and only if $2i - d^-(v_i) \ge j$.}
\end{defn}  

\ni  The Jaco graph $J_\infty(1)$ has the following four fundamental properties.
\begin{enumerate}\itemsep0mm
\item[(i)] $V(J_\infty(1)) = \{v_i:i \in \N\}$, 
\item[(ii)] if $v_j$ is the head of an arc then the tail is always a vertex $v_i, i<j$,
\item[(iii)] if $v_k$, for smallest $k \in \N$ is a tail vertex then all vertices $v_ l, k< l<j$ are tails of arcs to $v_j$, 
\item[(iv)] the degree of vertex $k$ is $d(v_k) = k$.
\end{enumerate} 

The family of finite Jaco graphs are those limited to $n \in \N$ vertices by lobbing off all vertices (and arcs) $v_t, t > n$ and denoted $J_n(1)$. Hence, trivially we have $d(v_i) \le i$ for $i \in \N$.

When defined directed graphs like Jaco graphs are considered the study will apply to the underlying graph. For $J_n(1)$ we denote the underlying graph $J^*_n(1)$. When the context is clear a Jaco graph could mean the directed graph, $J_n(1)$ or the underlying graph, $J^*_n(1)$. We now provide the degree sequences of Jaco graphs, $n \le 26$ in the defined order of vertex labelling (indexing) together with the values $ic(J^*_n(1)), \vartheta(J^*_n(1)), cn(J^*_n(1))$. See the Fisher algorithm in \cite{KFW2M} for the degree sequences.

\begin{ill}\label{Ill-2.24}{\rm 

The degree sequence, number of identity curling subsequences, curling index and curling number and  of first few finite Jaco graphs $J^*_n(1)$ are provided in the following table.}

\begin{table}[h!]
\begin{center}
\begin{tabular}{|c|c|c|c|c|}
\hline
$n$ & degree sequence & $ic$ & $\vartheta$ & $cn$ \\
\hline
1 & (0) & 1 & 1 & 1 \\
\hline
2 & (1,1) & 1 & 1 & 2 \\
\hline
3 & (1,2,1) & 1 & 1 & 2 \\
\hline
4 & (1,2,2,1) & 2 & 4 & 2 \\
\hline
5 & (1,2,3,2,2) & 1 & 1 & 3 \\
\hline
6 & (1,2,3,3,3,2) & 1 & 1 & 3 \\
\hline
7 & (1,2,3,4,4,3,3) & 1 & 1 & 3 \\
\hline
8 & (1,2,3,4,5,4,4,3) & 1 & 1 & 3 \\
\hline
9 & (1,2,3,4,5,5,5,4,3) & 1 & 1 & 3 \\
\hline
10 & (1,2,3,4,5,6,6,5,4,4) & 1 & 1 & 3 \\
\hline
11 & (1,2,3,4,5,6,7,6,5,5,4) & 1 & 1 & 3 \\
\hline
12 & (1,2,3,4,5,6,7,7,6,6,5,4) & 1 & 1 & 3 \\
\hline
13 & (1,2,3,4,5,6,7,8,7,7,6,5,5) & 2 & 4 & 3 \\
\hline
14 & (1,2,3,4,5,6,7,8,8,8,7,6,6,5) & 2 & 4 & 3 \\
\hline
15 & (1,2,3,4,5,6,7,8,9,9,8,7,7,6,6) & 2 & 4 & 3 \\
\hline
16 & (1,2,3,4,5,6,7,8,9,10,9,8,8,7,7,6) & 2 & 4 & 3 \\
\hline
17 & (1,2,3,4,5,6,7,8,9,10,10,9,9,8,8,7,6) & 2 & 4 & 3 \\
\hline
18 & (1,2,3,4,5,6,7,8,9,10,11,10,10,9,9,8,7,7) & 3 & 9 & 3 \\
\hline
19 & (1,2,3,4,5,6,7,8,9,10,11,11,11,10,10,9,8,8,7) & 3 & 9 & 3 \\
\hline
20 & (1,2,3,4,5,6,7,8,9,10,11,12,12,11,11,10,9,9,8,8) & 3 & 9 & 3 \\
\hline
21 & (1,2,3,4,5,6,7,8,9,10,11,12,13,12,12,11,10,10,9,9,8) & 3 & 9 & 3 \\
\hline
22 & (1,2,3,4,5,6,7,8,9,10,11,12,13,13,13,12,11,11,10,10,9,8) & 3 & 9 & 3 \\
\hline
23 & (1,2,3,4,5,6,7,8,9,10,11,12,13,14,14,13,12,12,11,11,10,9,9) & 3 & 9 & 3 \\
\hline
24 & (1,2,3,4,5,6,7,8,9,10,11,12,13,14,15,14,13,13,12,12,11,10,10) & 3 & 9 & 3 \\
\hline
25 & (1,2,3,4,5,6,7,8,9,10,11,12,13,14,15,15,14,14,13,13,12,11,11,10,9) & 3 & 9 & 3 \\
\hline
\end{tabular}
\end{center}
\end{table}
\end{ill}
In the above table $ic, ~ \vartheta$ and $cn$ stands for $ic(J^*_n(1))$, $\vartheta(J^*_n(1))$ and $cn(J^*_n(1))$ respectively.

From Illustration \ref{Ill-2.24}, we see that  any degree sequence of $J^*_n(1), n \ge 5$ can be re-arrange to have one or more 3-entry identity curling subsequence(s) at the tail. For example,  the degree sequence of the Jaco graph $J^*_{19}(1)$ can be re-arranged as $(1,2,3,4,5,6,7,7,9,8,8,8,10,10,10,11,11,11) = (1,2,3,4,5,6,7,7,9)\circ (8,8,8)\circ (10,10,10)\circ (11,11,11)$. We also note that $ic(J^*_n(1)) \ge ic(J^*_m(1))$ if $n > m$. The Fisher algorithm ensures that this will always be the case.

The following theorem establishes the existence and certain characteristics of the identity curling subsequences for a Jaco graph.

\begin{thm}\label{Thm-3.1}
A finite Jaco graph $J^*_n(1), ~ n \ge 5$ has at least one identity curling subsequence and all identity curling subsequences have cardinality $3$. 
\end{thm}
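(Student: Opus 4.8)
The plan is to read off the shape of the degree sequence of $J^*_n(1)$ from the Fisher algorithm of \cite{KFW2M} and then control the multiplicities of its entries. Write $d^-(v_i)$ for the in-degree of $v_i$ and let $m=m(n)$ be the \emph{Jaconian index}, i.e. the largest $i$ with $2i-d^-(v_i)\le n$ (since $a_i:=2i-d^-(v_i)$ is strictly increasing, $a_i\le n$ holds precisely for $i\le m$, and $m\le n-2$ for $n\ge5$). Then $d(v_i)=i$ for $1\le i\le m$, while $d(v_i)=d^-(v_i)+(n-i)$ for $m<i\le n$, and this latter ``tail'' is non-increasing; moreover every tail entry is at most $m$, because $a_{m+1}>n$ gives $d(v_{m+1})=d^-(v_{m+1})+n-m-1<m+1$. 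Hence every entry of the degree sequence lies in $\{0,1,\dots,m\}$, with each of $1,\dots,m$ occurring exactly once among $d(v_1),\dots,d(v_m)$.

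First I would bound multiplicities from above. The sequence $d^-(v_1)\le d^-(v_2)\le\cdots$ has consecutive differences in $\{0,1\}$ and never two $1$'s in a row (the Fibonacci/Beatty behaviour of Jaco graphs, cf. \cite{KFW2M}); equivalently, three consecutive tail entries can never all be equal. So no value appears more than twice inside the tail, and together with its single occurrence in $d(v_1),\dots,d(v_m)$, \emph{every value appears at most three times in the whole degree sequence}. Since a string factor $Y^{k}$ forces every value occurring in $Y$ to appear at least $k$ times, no rearrangement of the degree sequence admits $Y^{k}$ with $k\ge 4$, and therefore $cn(J^*_n(1))\le 3$.

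Next I would show that for $n\ge 5$ some value appears \emph{exactly} three times. Suppose not; then every value appears at most twice, so — each of $1,\dots,m$ already using one occurrence in the initial run and every tail entry being $\le m$ — the $n-m$ tail entries are pairwise distinct. Being distinct, non-increasing, and lying in $[d^-(v_n),m]$ (note $d(v_n)=d^-(v_n)$, as $v_n$ has out-degree $0$), they force $n-m\le m-d^-(v_n)+1$, whence $d^-(v_n)\le 2m-n+1$; on the other hand $2m-d^-(v_m)\le n$ gives $d^-(v_m)\ge 2m-n$, so $0\le d^-(v_n)-d^-(v_m)\le 1$. But this difference equals the number of ``$+1$'' steps of $d^-$ over the $n-m$ consecutive steps from index $m$ to $n$, and the Beatty structure forbids three consecutive ``$+0$'' steps, so at least $\lfloor (n-m)/3\rfloor$ of them are ``$+1$''; hence $\lfloor (n-m)/3\rfloor\le 1$, i.e. $n-m\le 5$. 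Since $m(n+1)-m(n)\in\{0,1\}$, the quantity $n-m(n)$ is non-decreasing in $n$, and a direct check gives $n-m(n)\ge 6$ for all $n\ge 14$; this contradiction disposes of $n\ge 14$, while for $5\le n\le 13$ a value of multiplicity $3$ is visible directly in the degree sequences of Illustration \ref{Ill-2.24}. So such a value exists for every $n\ge 5$.

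Finally I would assemble the two halves: a value $d$ of multiplicity $3$ yields an identity subsequence $(d)^3$ with $cn((d)^3)=3=cn(J^*_n(1))$, which is therefore an identity curling subsequence, giving the first assertion; and any identity curling subsequence $(d')^{k}$, being by definition a curling subsequence, satisfies $k=cn((d')^{k})=cn(J^*_n(1))=3$, giving the second. I expect the genuinely substantive step to be the existence of a triple: the bound $cn\le 3$ is essentially bookkeeping, but showing the tail is ``dense enough'' to force a repetition really uses the Fibonacci/Beatty fine structure of the Jaco in-degree sequence together with a finite check of the cases $5\le n\le 13$.
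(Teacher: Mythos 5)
Your proposal is correct, and it takes a genuinely different route from the paper. The paper proves Theorem~\ref{Thm-3.1} by induction on $n$: the base cases $5\le n\le 26$ are read off from the table in Illustration~\ref{Ill-2.24}, and the inductive step argues that the $l\ge 4$ double degree repeats following the Jaconian set of $J^*_k(1)$ persist in $J^*_{k+1}(1)$ and that no triple repeat can arise inside the tail, so that triples only appear after re-arrangement by clustering a tail double with the matching entry of the initial run $1,2,\dots,m$. You instead give a direct, non-inductive argument: split the degree sequence at the Jaconian index $m$ into the strictly increasing head $d(v_i)=i$ and the non-increasing tail $d(v_i)=d^-(v_i)+(n-i)$, bound every multiplicity by $3$ (at most one head occurrence plus at most two consecutive tail occurrences), and then force a multiplicity-$3$ value by the pigeonhole count $n-m\le m-d^-(v_n)+1$ combined with $d^-(v_m)\ge 2m-n$ and a finite check for $5\le n\le 13$. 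Your version is appreciably more rigorous than the paper's (whose inductive step is largely asserted, and which never really justifies the implicit upper bound $cn\le 3$), and it isolates precisely which structural inputs are being used, namely that the in-degree sequence of $J_\infty(1)$ has steps in $\{0,1\}$ with no two consecutive $+1$ steps and no three consecutive $+0$ steps. Those two run-length facts are the one place where your proof, like the paper's, leans on \cite{KFW2M}; if you write this up, state them as a standalone lemma (they follow from the Fibonacci-word description of $d^-$ in \cite{KFW2M}) so the dependence is explicit, since everything else in your argument is self-contained elementary counting.
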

\begin{proof}
Illustration \ref{Ill-2.24} shows that the result holds for $J^*_n(1), 5 \le n \le 26$. It is important to note that besides the possibility of a Jaconian set of cardinality $3$ (see for example $J^*_{22}(1)$) all other vertex degree repeats following are a double repeat. Hence, only after re-arrangement does a vertex degree preceding the Jaconian vertex degree, and equal to the doubly repeated vertex degree cluster to provide a $3$-entry identity curling subsequence.  Assume that it holds for $J^*_k(1), k > 26$, which we know has at least $4$ such distinct double vertex degree repeats. Also assume $J^*_k(1)$ has $l \ge 4$ such distinct double vertex degree repeats following the Jaconian set. Consider any such double vertex degree repeat at say $v_j, v_{j+1}$.

Now, consider $J_{k+1}(1)$. From the definition of a Jaco graph the edges $v_jv_{k+1}$ and $v_{j+1}v_{k+1}$ with perhaps $d(v_j) = j$. Since, the vertex degrees of all vertices $v_{h>j+1}$ are less than $d(v_{j+1})$ no triple vertex degree repeat is possible. This holds for all $l \ge 4$ double degree repeats. Hence, $J_{k+1}(1)$ has at least $l \ge 4$ double degree repeats. By re-arranging the degree sequence, subsequences of triple vertex degree repeats can be clustered. This completes the proof.
\end{proof}

\begin{cor}
For a Jaco graph $J_n(1), n \ge 1$ we have $1 \le cn(J^*_n(1)) \le 3$.
\end{cor}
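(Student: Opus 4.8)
The plan is to split the argument according to the order $n$ of the Jaco graph. For the small cases $1 \le n \le 4$ I would simply read the curling numbers off the table in Illustration~\ref{Ill-2.24}, namely $cn(J^*_1(1)) = 1$ and $cn(J^*_n(1)) = 2$ for $n = 2,3,4$, all of which lie in $\{1,2,3\}$. The lower bound $cn(J^*_n(1)) \ge 1$ is immediate for every $n \ge 1$, since the degree sequence is a non-empty finite sequence and can therefore be written as $\epsilon \circ X_1^{1}$ with $X_1$ the whole sequence; equivalently this is the content of Theorem~\ref{Thm-2.3}, where no appended curling number is ever smaller than $1$. So the real content is the upper bound $cn(J^*_n(1)) \le 3$ for $n \ge 5$.

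For the upper bound I would first record the structural fact — already extracted from the Fisher algorithm in the proof of Theorem~\ref{Thm-3.1} — that in the degree sequence of $J^*_n(1)$ every repeated vertex degree is a double repeat, with the single possible exception of a triple repeat arising from a Jaconian set of cardinality $3$. Hence \emph{every degree value occurs at most $3$ times} in the degree sequence of $J^*_n(1)$. Next I would invoke the reduction noted in the remark following Definition~\ref{Def-1.2}: after an optimal re-arrangement, $cn(J^*_n(1))$ equals the largest exponent $k$ for which the degree sequence can be written $X_1^{1}\circ X^{k}$, i.e.\ $cn(J^*_n(1)) = cn(X^{k})$ for a string factor $X^k$ of maximal exponent over all re-arrangements. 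If $X$ is a single entry $(\alpha)$, then $X^{k} = (\alpha)^{k}$ is an identity subsequence realising the curling number, hence a sub-run of an identity curling subsequence, so by Theorem~\ref{Thm-3.1} it has cardinality at most $3$ and $k \le 3$. If instead $|X| \ge 2$, I would pick any degree value $\gamma$ occurring $m \ge 1$ times inside one copy of $X$; then $\gamma$ occurs exactly $mk$ times in $X^{k}$, hence at least $mk$ times in the degree sequence of $J^*_n(1)$, which forces $mk \le 3$ by the structural fact, whence $k \le 3$. In either case $cn(J^*_n(1)) = k \le 3$, which together with the small cases and the lower bound finishes the proof.

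The main obstacle I anticipate is making the multiplicity bound ``every degree value appears at most $3$ times'' genuinely rigorous for \emph{all} $n$, not merely for the tabulated range; the cleanest route is to lean on the inductive step already carried out in the proof of Theorem~\ref{Thm-3.1}, which shows that the only source of a triple repeat is a Jaconian set of size $3$ and that no repeat of multiplicity $4$ or more can ever be created when passing from $J_k(1)$ to $J_{k+1}(1)$. A secondary, minor point to watch is the subcase $|X| \ge 2$ in which $X$ is itself a constant run such as $(\alpha,\alpha)$: there $X^{k}$ collapses to $(\alpha)^{2k}$, and the same multiplicity bound applies to give $2k \le 3$, so this case is harmless and in fact yields an even smaller exponent.
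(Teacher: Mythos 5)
Your proposal is correct and follows the same skeleton as the paper's proof: read the values for $1\le n\le 4$ off the table in Illustration~\ref{Ill-2.24}, note the trivial lower bound, and use Theorem~\ref{Thm-3.1} for $n\ge 5$. The difference is that the paper's proof is a one-line appeal --- ``from Theorem~\ref{Thm-3.1} it follows that $cn(J^*_n(1))=3$ for $n\ge 5$'' --- whereas you supply the step that this appeal actually needs: Theorem~\ref{Thm-3.1} only bounds the length of \emph{identity} curling subsequences, so one must still rule out a factor $X^{k}$ with $|X|\ge 2$ and $k\ge 4$. Your multiplicity-counting argument (any degree value occurring $m$ times in $X$ occurs $mk$ times in $X^{k}$, hence $mk\le 3$) closes exactly that gap, and is the genuinely valuable addition here. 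Two caveats: the bound ``every degree value occurs at most $3$ times'' is extracted from the \emph{proof} of Theorem~\ref{Thm-3.1} rather than its statement, so your argument inherits whatever informality that inductive argument has (you flag this yourself, appropriately); and your aside that the lower bound is ``the content of Theorem~\ref{Thm-2.3}'' is off --- that theorem concerns the curling number conjecture, not a lower bound on $cn$ --- but the lower bound is immediate from the definition anyway, as your first justification already shows.
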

\begin{proof}
Illustration \ref{Ill-2.24} shows that $cn(J^*_1(1)) = 1$, $cn(J^*_2(1)) = cn(J^*_3(1)) = cn (J^*_4(1)) \\ = 2$ and from Theorem \ref{Thm-3.1}, it follows that $cn(J^*_n(1)) = 3, n \ge 5$.
\end{proof}

\section{Curling Number of Set-Graphs}

In this section, we discuss the curling number of a special class of graphs called set-graphs. Let us first recall the definition of set-graphs as given in \cite{KC2S}. 

\begin{defn}\label{D-SG}{\rm 
\cite{KC2S} Let $A^{(n)} = \{a_1, a_2, a_3, \ldots, a_n\}, n\in \N$ be a non-empty set and the $i$-th $s$-element subset of $A^{(n)}$ be denoted by $A_{s,i}^{(n)}$.  Now consider $\cS = \{ A_{s,i}^{(n)}: A_{s,i}^{(n)} \subseteq A^{(n)}, A_{s,i}^{(n)} \ne \emptyset\}$. The \textit{set-graph} corresponding to set $A^{(n)}$, denoted $G_{A^{(n)}}$, is defined to be the graph with $V(G_{A^{(n)}}) = \{v_{s,i}: A_{s,i}^{(n)} \in \cS\}$ and $E(G_{A^{(n)}}) = \{v_{s,i}v_{t,j}:~ A_{s,i}^{(n)}\cap A_{t,j}^{(n)}\ne \emptyset\}$, where $s\ne t~ \text{or}~ i\ne j$. }
\end{defn} 

\begin{prop}
{\rm \cite{KC2S}} If $G$ is a set-graph, then $G$ has odd number of vertices.
\end{prop}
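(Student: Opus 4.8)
The plan is to count the vertices of $G_{A^{(n)}}$ directly and read off the parity. By Definition \ref{D-SG}, the vertex set is $V(G_{A^{(n)}}) = \{v_{s,i}: A_{s,i}^{(n)} \in \cS\}$, where $\cS$ is the family of all non-empty subsets of $A^{(n)}$, and the label $v_{s,i}$ is attached to the $i$-th $s$-element subset. So the first step is to observe that the assignment $A_{s,i}^{(n)} \mapsto v_{s,i}$ is a bijection between $\cS$ and $V(G_{A^{(n)}})$; in particular $|V(G_{A^{(n)}})| = |\cS|$.

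Next I would compute $|\cS|$. A set with $n$ elements has exactly $2^{n}$ subsets, of which exactly one is empty, so $|\cS| = 2^{n} - 1$; equivalently, grouping by cardinality, $|\cS| = \sum_{s=1}^{n}\binom{n}{s} = 2^{n} - 1$. Hence $|V(G_{A^{(n)}})| = 2^{n} - 1$. To finish, note that $2^{n}$ is even for every $n \in \N$, so $2^{n} - 1$ is odd, and therefore every set-graph has an odd number of vertices.

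The argument is essentially a one-line counting observation, so there is no serious obstacle; the only point demanding a little care is the bookkeeping in the first step, namely confirming that the double index $(s,i)$ of Definition \ref{D-SG} is a faithful re-parametrisation of $\cS$ (with $s$ running over $1 \le s \le n$ and $i$ over the $\binom{n}{s}$ many $s$-element subsets, with no repetitions and no omissions), so that the vertex count really is $2^{n}-1$ rather than something else. Once that is checked, the parity conclusion is immediate.
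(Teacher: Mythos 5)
Your proof is correct: the vertices of $G_{A^{(n)}}$ are in bijection with the non-empty subsets of $A^{(n)}$, so $|V(G_{A^{(n)}})| = 2^{n}-1$, which is odd. The paper itself states this proposition as a cited result from \cite{KC2S} without reproducing a proof, and your counting argument is the standard (and essentially only) way to establish it.
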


An important result, which is proved in \cite{KC2S}, on set-graphs that is relevant in this context is given below.

\begin{thm}\label{T-SGDV}
{\rm \cite{KC2S}} The vertices $v_{s,i}, v_{s,j}$ of $G_{A^{(n)}}$, corresponding to subsets $A^{(n)}_{s,i}$ and $A^{(n)}_{s,j}$ in $\cS$ of equal cardinality, of the set-graph $G_{A^{(n)}}$ have the same degree in $G_{A^{(n)}}$. That is, $d_{G_{A^{(n)}}}(v_{s,i}) = d_{G_{A^{(n)}}}(v_{s,j})$.
\end{thm}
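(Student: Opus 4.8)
The plan is to exploit the symmetry of the set-graph under permutations of the ground set $A^{(n)}$. Given two subsets $A^{(n)}_{s,i}$ and $A^{(n)}_{s,j}$ of the same cardinality $s$, I would first fix a permutation $\pi$ of $A^{(n)}$ with $\pi\big(A^{(n)}_{s,i}\big) = A^{(n)}_{s,j}$; such a $\pi$ exists precisely because the two subsets have the same number of elements. The permutation $\pi$ acts on the power set of $A^{(n)}$ via $B \mapsto \pi(B)$, and since $|\pi(B)| = |B|$ and $\pi(B) \ne \emptyset$ whenever $B \ne \emptyset$, this action restricts to a bijection of $\cS$ onto itself that preserves cardinality of subsets.

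Next I would show that the induced map $\phi : V(G_{A^{(n)}}) \to V(G_{A^{(n)}})$, defined by $\phi(v_{t,k}) = v_{t,k'}$ where $A^{(n)}_{t,k'} = \pi\big(A^{(n)}_{t,k}\big)$, is a graph automorphism. The key observation is that $B \cap C \ne \emptyset$ if and only if $\pi(B) \cap \pi(C) = \pi(B \cap C) \ne \emptyset$, so $\phi$ both preserves and reflects the adjacency relation of Definition \ref{D-SG}, and it is a bijection because $\pi$ is invertible. Since automorphisms preserve vertex degrees and $\phi(v_{s,i}) = v_{s,j}$ by construction, we conclude $d_{G_{A^{(n)}}}(v_{s,i}) = d_{G_{A^{(n)}}}(v_{s,j})$.

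Alternatively, and as a check on the above, one can compute the degree explicitly. The neighbours of $v_{s,i}$ are exactly the vertices $v_{t,k}$ with $(t,k) \ne (s,i)$ whose associated subset meets $A^{(n)}_{s,i}$. Of the $2^n - 1$ nonempty subsets of $A^{(n)}$, precisely $2^{n-s} - 1$ are nonempty subsets of the $(n-s)$-element complement of $A^{(n)}_{s,i}$, hence disjoint from it; deleting these, together with $A^{(n)}_{s,i}$ itself, yields $d_{G_{A^{(n)}}}(v_{s,i}) = 2^n - 2^{n-s} - 1$, a value depending only on $n$ and $s$. Equal cardinality therefore forces equal degree.

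There is no deep obstacle here; the only point requiring care is verifying that $\phi$ is well defined on the indexed vertex set and is a bijection, i.e. that relabelling subsets by $\pi$ permutes the $s$-element subsets among themselves for each $s$, which is immediate from $|\pi(B)| = |B|$. Once that is in place, the preservation of adjacency and of degree is routine, and either route completes the proof.
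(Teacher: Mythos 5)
Your proposal is correct. Note first that the paper you are working against does not actually prove this statement: Theorem~\ref{T-SGDV} is imported verbatim from \cite{KC2S} and is stated here without proof, so there is no in-text argument to compare yours to. Taken on its own terms, your argument is sound, and in fact you give two independent and complete routes. The automorphism route is clean: a permutation $\pi$ of the ground set induces a cardinality-preserving bijection of $\cS$, and since $\pi(B)\cap\pi(C)=\pi(B\cap C)$ for any bijection, the induced vertex map preserves and reflects the adjacency of Definition~\ref{D-SG}, hence is an automorphism carrying $v_{s,i}$ to $v_{s,j}$. The direct count is even stronger than what the theorem asks for: of the $2^n-1$ nonempty subsets, exactly $2^{n-s}-1$ lie in the complement of $A^{(n)}_{s,i}$ and are therefore non-neighbours, and removing $A^{(n)}_{s,i}$ itself gives $d_{G_{A^{(n)}}}(v_{s,i}) = 2^n - 2^{n-s} - 1$, a function of $n$ and $s$ alone. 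That explicit formula is worth keeping, since it immediately supplies the quantities $r_i$ left unspecified in the proof of Theorem~\ref{T-CNSG}. No gaps.
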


Invoking these concepts of set-graphs, let us now determine the curling number and compound curling number of set-graphs.

\begin{thm}\label{T-CNSG}
Let $G=G_{A^{(n)}}$ be a set-graph with respect to a non-empty finite set $A=A^{(n)}$. Then, the curling number of $G$ is $\binom{n}{\lfloor\frac{n}{2}\rfloor}$ and the compound curling number of $G$ is $\prod\limits_{i=1}^n\binom{n}{i}$.
\end{thm}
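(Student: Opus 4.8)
The plan is to analyze the degree sequence of the set-graph $G=G_{A^{(n)}}$ directly. Recall that the vertices of $G$ correspond to the non-empty subsets of $A^{(n)}$, so there are $2^n-1$ vertices, grouped into $n$ classes according to cardinality: for each $i$ with $1\le i\le n$ there are $\binom{n}{i}$ vertices corresponding to the $i$-element subsets. By Theorem \ref{T-SGDV}, all vertices in the same cardinality class have the same degree; call this common value $d_i$. Hence the degree sequence of $G$, after grouping, can be written as a string of identity subsequences
\begin{equation*}
(d_1)^{\binom{n}{1}}\circ (d_2)^{\binom{n}{2}}\circ \cdots \circ (d_n)^{\binom{n}{n}}.
\end{equation*}
The first thing I would check is that the $d_i$ are pairwise distinct (or at least identify exactly which coincide): a vertex $v_{s,i}$ for an $s$-set is non-adjacent only to those subsets disjoint from it, of which there are $2^{n-s}-1$, so $d_s = (2^n-1) - (2^{n-s}-1) - 1 = 2^n - 2^{n-s} - 1$, which is strictly increasing in $s$. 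Thus the $n$ identity subsequences above have genuinely distinct entry-values, so no mergers occur and this is a legitimate string-of-identity-subsequences representation.

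For the curling number, by Definition \ref{Def-1.2} we write the well-arranged degree sequence as $X_1^{k_1=1}\circ X_2^{k_2}$ with $k_2$ maximal over all subsequences; since the only repeated blocks available are the identity blocks $(d_i)^{\binom{n}{i}}$ (and any curling subsequence built from distinct values has length at most $n$, hence curling number at most $n \le \binom{n}{\lfloor n/2\rfloor}$ for $n\ge 4$, with small cases checked separately), the maximum exponent is $\max_i \binom{n}{i} = \binom{n}{\lfloor n/2\rfloor}$. Placing the block of largest multiplicity last gives $cn(G)=\binom{n}{\lfloor n/2\rfloor}$.

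For the compound curling number, apply Definition \ref{Def-3.1} to the string displayed above: $cn^c(G) = \prod_{i=1}^{n} k_i = \prod_{i=1}^{n}\binom{n}{i}$, which is exactly the claimed value. The main obstacle I anticipate is not the arithmetic but the bookkeeping needed to justify that the grouped-by-cardinality representation is the correct ``well-arranged'' one for computing both quantities: one must argue that no alternative re-arrangement of the degree sequence produces a longer repeated block than $\binom{n}{\lfloor n/2\rfloor}$ for $cn$, and — more delicately — that the string-of-identity-subsequences factorization used for $cn^c$ is forced (so that the product $\prod_i \binom{n}{i}$ is well-defined and not an artifact of one particular grouping). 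This rests on the distinctness of the $d_i$ established above, plus the observation that any maximal equal-entry subsequence must be exactly one full cardinality class. I would also handle $n=1,2,3$ as explicit base cases, since there $\binom{n}{\lfloor n/2\rfloor}$ may tie with or be dominated by length-$n$ alternating subsequences and the general argument needs $n$ large enough for $\binom{n}{\lfloor n/2\rfloor}$ to dominate $n$.
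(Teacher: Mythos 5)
Your proposal follows essentially the same route as the paper: group the vertices by subset cardinality, invoke Theorem \ref{T-SGDV} to write the degree sequence as the string $(d_1)^{\binom{n}{1}}\circ\cdots\circ(d_n)^{\binom{n}{n}}$, and read off $cn(G)=\max_i\binom{n}{i}=\binom{n}{\lfloor n/2\rfloor}$ and $cn^c(G)=\prod_i\binom{n}{i}$. Your explicit computation $d_s=2^n-2^{n-s}-1$ showing the class degrees are pairwise distinct is a worthwhile addition that the paper's proof leaves implicit, but it does not change the underlying argument.
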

\begin{proof}
Given that $G=G_{A^{(n)}}$ be a set-graph with respect to a non-empty finite set $A=A^{(n)}$. By Theorem \ref{T-SGDV}, the vertices of $G$ corresponding to the subsets of $A$ with same cardinality, have the same degree. Let $r_i$ be the degree of all vertices of $G$ corresponding to $i$-element subsets of $A$, where $1\le i\le n$.  We know that there are $\binom{n}{i}$ vertices in $G$ which have the same degree $r_i$. Then, the degree sequence of $G$ can be written in the string form as $(r_1)^{\binom{n}{1}}\circ (r_2)^{\binom{n}{2}}\circ (r_3)^{\binom{n}{3}}\circ \ldots \circ (r_n)^{\binom{n}{n}}$. We know that the maximum value among the binomial coefficients $\binom{n}{i};~ 1\le i\le n$ is $\binom{n}{\frac{n}{2}}$ if $n$ is even and $\binom{n}{\frac{n-1}{2}}$ (or $\binom{n}{\frac{n+1}{2}}$). Therefore, the curling number of $G$ is $\binom{n}{\lfloor\frac{n}{2}\rfloor}$. 

Also, the compound curling number of the set-graph $G$ is $\binom{n}{1} \binom{n}{2} \binom{n}{3} \ldots \binom{n}{n}= \prod\limits_{i=1}^n\binom{n}{i}$. This completes the proof.
\end{proof}

It is quite interesting to check the number theoretic properties of the curling number and compound curling number of set-graphs. The compound curling number of a set-graph can be rewritten in terms of factorial and hyperfactorial functions as follows. 

\begin{prop}
The compound curling number of a set-graph $G=G_{A^{(n)}}$  is given by $cn^c(G)=\frac{1}{(n\,!)^{(n+1)}}(\prod\limits_{i=1}^n i^i)^2-1$.
\end{prop}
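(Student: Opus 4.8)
The plan is to start from Theorem~\ref{T-CNSG}, which already gives $cn^c(G) = \prod_{i=1}^{n}\binom{n}{i}$, and then to massage this product of binomial coefficients into the claimed closed form by repeated reindexing of products of factorials. No new graph-theoretic input is needed: the whole argument is an elementary identity linking the superfactorial $\prod_{i=1}^{n} i!$ and the hyperfactorial $\prod_{i=1}^{n} i^{i}$.

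First I would write $\binom{n}{i} = \tfrac{n!}{i!\,(n-i)!}$ and pull out the numerators, obtaining
\[
cn^c(G) \;=\; \prod_{i=1}^{n}\binom{n}{i} \;=\; \frac{(n!)^{n}}{\left(\prod_{i=1}^{n} i!\right)\left(\prod_{i=1}^{n}(n-i)!\right)}.
\]
Next I would reindex the second factorial product by $j=n-i$, so $\prod_{i=1}^{n}(n-i)! = \prod_{j=0}^{n-1} j! = \prod_{j=1}^{n-1} j!$ (using $0!=1$); inserting a factor $\tfrac{n!}{n!}$ shows the denominator equals $\dfrac{\left(\prod_{i=1}^{n} i!\right)^{2}}{n!}$, whence
\[
cn^c(G) \;=\; \frac{(n!)^{\,n+1}}{\left(\prod_{i=1}^{n} i!\right)^{2}},
\]
which already expresses the compound curling number through the (super)factorial function.

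The key remaining step, and the only one requiring a little care, is the hyperfactorial identity $\prod_{k=1}^{n} k! = \dfrac{(n!)^{\,n+1}}{\prod_{j=1}^{n} j^{j}}$. I would prove it by exchanging the order of the double product: $\prod_{k=1}^{n} k! = \prod_{k=1}^{n}\prod_{j=1}^{k} j = \prod_{j=1}^{n} j^{\,\#\{k\,:\,j\le k\le n\}} = \prod_{j=1}^{n} j^{\,n-j+1}$, and then writing $n-j+1 = (n+1)-j$ to get $\prod_{j=1}^{n} j^{n+1}\big/\prod_{j=1}^{n} j^{j} = (n!)^{n+1}\big/\prod_{j=1}^{n} j^{j}$. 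Substituting $\left(\prod_{i=1}^{n} i!\right)^{2} = (n!)^{2(n+1)}\big/\left(\prod_{j=1}^{n} j^{j}\right)^{2}$ into the displayed formula for $cn^c(G)$ and cancelling one factor $(n!)^{n+1}$ yields $cn^c(G) = \dfrac{1}{(n!)^{\,n+1}}\left(\prod_{i=1}^{n} i^{i}\right)^{2}$, the asserted expression (the trailing ``$-1$'' in the statement appears to be a typographical slip, as the case $n=1$ — where $cn^c(G)=1$ — already shows). The main obstacle is thus purely bookkeeping: keeping the index shift $j=n-i$, the boundary term $0!$, and the exponent count $n-j+1$ straight; there is no genuine analytic or combinatorial difficulty.
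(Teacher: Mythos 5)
Your argument is correct, and it verifies the identity rather than merely quoting it, which is the main difference from the paper. The paper's proof is a one-line citation: it introduces the hyperfactorial $H(n)=\prod_{i=1}^{n}i^{i}$ and invokes the known identity $\prod_{i=0}^{n}\binom{n}{i}=H(n)^{2}/(n!)^{n+1}$ from Abramowitz--Stegun (the formula is typeset garbled in the paper, with the $(n!)^{n+1}$ appearing in a nested denominator), then substitutes into Theorem~\ref{T-CNSG}. You instead derive that identity from scratch: expanding $\binom{n}{i}=n!/(i!\,(n-i)!)$, reindexing to get $cn^{c}(G)=(n!)^{n+1}/\bigl(\prod_{i=1}^{n}i!\bigr)^{2}$, and then proving the superfactorial--hyperfactorial relation $\prod_{k=1}^{n}k!=(n!)^{n+1}/\prod_{j=1}^{n}j^{j}$ by swapping the order of the double product. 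All the index bookkeeping checks out, so your version is self-contained where the paper's is not. You are also right that the trailing ``$-1$'' in the stated formula is an error carried through both the statement and the paper's proof: for $n=1$ the product is $1$ while the stated expression gives $0$, and for $n=2$ it is $2$ versus $1$; the correct closed form is $cn^{c}(G)=\frac{1}{(n!)^{n+1}}\bigl(\prod_{i=1}^{n}i^{i}\bigr)^{2}$ with no subtraction, exactly as your derivation produces.
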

\begin{proof}
The hyperfactorial function $H(n)$ is defined as $H(n)=(\prod\limits_{i=1}^n i^i)$. Moreover, we have $\prod_{i=0}^{n}\binom{n}{i}=\frac{H^2(n)}{\frac{1}{(n\,!)^{(n+1)}}}$ (see \cite{AS1}). Therefore, we have the compound curling number $cn^c(G)$ is $\frac{1}{(n\,!)^{(n+1)}}(\prod\limits_{i=1}^n i^i)^2-1$.
\end{proof}

\begin{thm}
The compound curling number of a set -graph $G=G_{A^{(n)}}$ is a perfect square if and only if $n$ is an odd integer.
\end{thm}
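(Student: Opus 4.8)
The plan is to reduce everything to Theorem \ref{T-CNSG}, which gives $cn^c(G)=\prod_{i=1}^{n}\binom{n}{i}=\prod_{i=0}^{n}\binom{n}{i}$ (the extra factor $\binom{n}{0}=1$ is harmless), and then to exploit the symmetry $\binom{n}{i}=\binom{n}{n-i}$ of the binomial coefficients occurring in the degree string. Pairing the index $i$ with $n-i$ groups the terms of this product into blocks of two equal factors, with at most one leftover middle factor; whether such a leftover factor exists is precisely what separates odd $n$ from even $n$.

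If $n$ is odd, say $n=2k+1$, the index set $\{0,1,\dots,n\}$ splits into the $k+1$ blocks $\{i,n-i\}$, $0\le i\le k$, and on each block $\binom{n}{i}\binom{n}{n-i}=\binom{n}{i}^{2}$. Hence $cn^c(G)=\bigl(\prod_{i=0}^{k}\binom{n}{i}\bigr)^{2}$ is a perfect square. If instead $n$ is even, say $n=2m$ with $m\ge 1$, the same pairing leaves the central coefficient $\binom{2m}{m}$ unmatched, so $cn^c(G)=\binom{2m}{m}\cdot\bigl(\prod_{i=0}^{m-1}\binom{2m}{i}\bigr)^{2}$. Since $cn^c(G)$ is a positive integer, it follows (by unique factorisation, or the rational root theorem) that $cn^c(G)$ is a perfect square if and only if $\binom{2m}{m}$ is.

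It then remains to establish the classical fact that $\binom{2m}{m}$ is never a perfect square for $m\ge 1$. I would dispatch $m=1$ directly, since $\binom{2}{1}=2$. For $m\ge 2$, I would invoke Bertrand's postulate to produce a prime $p$ with $m<p<2m$: then $p$ is the only multiple of $p$ in $\{1,\dots,2m\}$ (because $2p>2m$), and $p^{2}>m^{2}\ge 2m$, so $p$ occurs exactly once in the prime factorisation of $(2m)!$ and not at all in that of $m!$. Therefore $p$ divides $\binom{2m}{m}=(2m)!/(m!)^{2}$ to exactly the first power, and a number having a prime to an odd exponent cannot be a perfect square. Combining the two parities yields the stated equivalence.

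The only genuinely nontrivial ingredient is Bertrand's postulate; the rest is bookkeeping with the degree string from Theorem \ref{T-CNSG}. The points that need care are the small cases $n\in\{1,2\}$, the verification that $p^{2}>2m$ so that the $p$-adic valuation of $(2m)!$ is really $1$, and the remark that integrality of $cn^c(G)$ is what converts the "if and only if" for $\binom{2m}{m}$ into the conclusion for the graph.
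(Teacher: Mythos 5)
Your proposal is correct and follows essentially the same route as the paper: pair $\binom{n}{i}$ with $\binom{n}{n-i}$ in the product $cn^c(G)=\prod_{i=1}^{n}\binom{n}{i}$ from Theorem \ref{T-CNSG}, obtaining a perfect square when $n$ is odd and a square times the central coefficient $\binom{2m}{m}$ when $n=2m$ is even. The one substantive difference is in your favour: the paper simply asserts that $\binom{n}{n/2}$ is never a perfect square, whereas you actually prove it (disposing of $m=1$ directly and, for $m\ge 2$, using Bertrand's postulate to find a prime $p$ with $m<p<2m$, checking $p^2>2m$ so that $p$ divides $\binom{2m}{m}$ to exactly the first power). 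That verification is the only nontrivial ingredient in the whole argument, so your write-up closes a genuine gap in the paper's proof rather than merely restating it.
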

\begin{proof}
If $n$ is an odd integer, then, by the result $\binom{n}{r}=\binom{n}{n-r}$, we have $cn^c(G)=\binom{n}{1} \binom{n}{2} \ldots \binom{n}{\frac{n-1}{2}}  \binom{n}{\frac{n+1}{2}} \ldots \binom{n}{n-1}\binom{n}{n}=\binom{n}{1} \binom{n}{2} \ldots \binom{n}{\frac{n-1}{2}} \binom{n}{\frac{n-1}{2}} \ldots \binom{n}{1}.1= (\prod\limits_{i=1}^n\binom{n}{\frac{n-1}{2}})^2$.

If $n$ is an even integer, then by using the result, we have $cn^c(G)=(\prod\limits_{i=1}^n\binom{n}{\frac{n}{2}-1})^2 \binom{n}{\frac{n}{2}}$. Since, $\binom{n}{\frac{n}{2}}$ is not a perfect square for any positive integer $n$, $cn^c(G)$ can not be a perfect square in this case. This completes the proof.
\end{proof}

\section{Conclusion}
In this paper, we have introduced the concepts of curling number and compound curling number of a given graph and discussed certain properties of these new parameters for certain standard graphs and digraphs. More problems regarding the curling number and compound curling number of certain other graph classes, graph operations, graph products and graph powers are still to be settled. Some other problems we have identified in this area for further works are the following. 

\begin{prob}{\rm 
Determine the condition for the compound curling number of a spanning subgraph of a given regular graph $G$ is greater than that of $G$.}
\end{prob}

\begin{prob}{\rm 
Characterise the graphs in accordance with its compound curling number and that of its spanning subgraphs.}
\end{prob}

\begin{prob}{\rm 
Verify the existence of non-regular graphs whose curling numbers and compound curling numbers are equal and characterise these graphs if exists.}
\end{prob}

There are more problems in this area which seem to be promising for further investigations. All these facts highlights a wide scope for further studies in this area.

\end{document}